\documentclass{article}

\usepackage[english]{babel}
\usepackage[utf8x]{inputenc}
\usepackage[T1]{fontenc}
\usepackage[a4paper,top=3cm,bottom=2cm,left=3cm,right=3cm,marginparwidth=1.75cm]{geometry}
\usepackage{amsmath}
\usepackage{graphicx}
\usepackage[colorinlistoftodos]{todonotes}
\usepackage{listings}
\usepackage{color} 
\definecolor{mygreen}{RGB}{28,172,0} 
\definecolor{mylilas}{RGB}{170,55,241}
\usepackage{changepage}
\usepackage{graphicx}
\usepackage{caption}
\usepackage{subcaption}
\usepackage[makeroom]{cancel}
\usepackage{listings}
\usepackage{amssymb}
\usepackage{amsmath}
\usepackage{amsfonts}
\usepackage{float}
\usepackage{listings}
\usepackage{framed}
\usepackage[outdir=./]{epstopdf}
\usepackage{amsthm}
\usepackage{algorithm}
\usepackage{pdfpages}
\usepackage{mathtools}
\usepackage{bm}
\usepackage{mathrsfs}
\usepackage{enumerate}
\usepackage{authblk}
\usepackage{todonotes}

\linespread{1.2}

\newtheorem{theorem}{Theorem}
\newtheorem{proposition}{Proposition}
\newtheorem{corollary}{Corollary}

\newtheorem{assumption}{Assumption}
\newtheorem{lemma}{Lemma}

\theoremstyle{definition}
\newtheorem{definition}{Definition}
\newtheorem{remark}{Remark}
\newtheorem{example}{Example}

\usepackage{mathmacros}
\newcommand{\prob}{\mathscr{P}} 
\newcommand{\cgf}{\mathcal{K}} 
\newcommand{\nd}{\mathcal{N}} 
\newcommand{\match}{\mathcal{M}} 
\newcommand{\proj}{\Pi} 
\newcommand{\overbar}[1]{\mkern 1.5mu\overline{\mkern-1.5mu#1\mkern-1.5mu}\mkern 1.5mu}

\providecommand{\keywords}[1]{\textit{Keywords and phrases: } #1}

\begin{document}

\title{Convergence and stability of a micro-macro acceleration method: linear slow-fast stochastic differential equations with additive noise}

\author[1]{Przemys{\l}aw Zieli{\'n}ski}
\author[1]{Hannes Vandecasteele}
\author[1]{Giovanni Samaey}
\affil[1]{KU Leuven, Department of Computer Science, NUMA Section, Celestijnenlaan 200A box 2402, 3001 Leuven, Belgium}
\date{\today}

\maketitle
\begin{abstract}
We analyse the convergence and stability of a micro-macro acceleration algorithm for Monte Carlo simulations of stiff stochastic differential equations with a time-scale separation between the fast evolution of the individual stochastic realizations and some slow macroscopic state variables of the process. The micro-macro acceleration method performs a short simulation of a large ensemble of individual fast paths, before extrapolating the macroscopic state variables of interest over a larger time step. After extrapolation, the method constructs a new probability distribution that is consistent with the extrapolated macroscopic state variables, while minimizing Kullback-Leibler divergence with respect to the distribution available at the end of the Monte Carlo simulation. In the current work, we study the convergence and stability of this method on linear stochastic differential equations with additive noise, when only extrapolating the mean of the slow component. For this case, we prove convergence to the microscopic dynamics when the initial distribution is Gaussian and present a stability result for non-Gaussian initial laws.
\end{abstract}
\keywords{micro-macro acceleration methods, stiff stochastic differential equations, entropy minimization, Kullback-Leibler divergence, convergence \& stability}

\section{Introduction}
Applications with multiple time scales arise in many domains, such as nanoscience \cite{fish2006bridging}, fluid dynamics \cite{steinhauser2017computational}, material science \cite{kwon2008multiscale} and life sciences \cite{deisboeck2010multiscale}. Still, the design and analysis of efficient numerical schemes for stiff stochastic differential equations (SDEs) remains challenging. On the one hand, due to a small stability domain, explicit schemes require too many time steps to reach the end of the simulation. On the other hand, while implicit methods are successful for ordinary differential equations, they fail to compute the correct invariant distribution for SDEs \cite{li2008effectiveness}.  Therefore, stiff SDEs require new dedicated numerical multiscale methods. A lot of work has already been done over the years, and we refer to the heterogeneous multiscale method \cite{weinan2003multiscale,abdulle2012heterogeneous}, equation-free techniques \cite{kevrekidis2003equation,kevrekidis2009equation}, and S-ROCK \cite{abdulle2007stabilized,abdulle2008s} in particular, as starting points in the literature.

Recently, a micro-macro acceleration algorithm was introduced to accelerate the Monte Carlo simulation of SDEs with a time-scale separation between the fast individual stochastic paths and some slow macroscopic state variables of interest, which we define as expectations of some quantities of interest over the microscopic distributions~\cite{debrabant2017micro}. Micro-macro acceleration connects the two levels of description of the process: individual paths of the SDE model on the microscopic level and the macroscopic state variables at the macroscopic level. The method alleviates the computational cost of a direct Monte Carlo simulation by interleaving a short bursts of Monte Carlo simulation with extrapolation of the macroscopic state variables over a larger time step. After extrapolation, the method constructs a new probability distribution by \textit{matching} the last available distribution after the microscopic simulation with the extrapolated macroscopic state variables. Matching minimally perturbs the last available distribution after the microscopic simulation to make it consistent with the extrapolated state variables. Thus, in this approach, matching is inherently an optimization problem. There are many ways of measuring the difference between probability distributions. Following~\cite{lelievre2018analysis}, we choose relative entropy or Kullback-Leibler divergence in the present paper, based on information theoretic considerations; for some other strategies see~\cite{debrabant2017micro}.  

A few fundamental properties of micro-macro acceleration have already been investigated. First, in \cite{lelievre2018analysis}, convergence was studied for general SDEs and for any time-scale separation. The analysis requires fixing an infinite hierarchy of macroscopic state variables that are used for extrapolation. It is then shown that convergence not only depends on taking the micro and extrapolation time steps to zero, but also on extrapolating an increasing number of macroscopic state variables as the extrapolation time step decreases. Second, in \cite{debrabant2018study}, the asymptotic numerical stability of the micro-macro acceleration method was studied on a linear system of SDEs with Gaussian initial conditions.
The stability criterion that was employed checks if the distributions obtained from the micro-macro acceleration method reach the equilibrium Gaussian distribution of the underlying microscopic integrator as the number of micro-macro time-steps tends to infinity. The analysis reveals that, when the slow and fast components of the system are decoupled, the maximal extrapolation time step is independent of the time-scale separation. 

Convergence and stability are concerned with two limiting situations: convergence studies the method's behaviour at a fixed moment in time as the extrapolation time step tends to zero, whereas stability studies the method's behaviour for large time steps and long time horizons. Once both convergence and stability have been established, one can look at the appropriate selection of the extrapolation time step and the number of macroscopic state variables for accuracy and efficiency. In \cite{vandecasteele2018efficiency}, we recently investigated the accuracy and efficiency of the micro-macro acceleration on slow-fast systems, 
 showing numerically that micro-macro acceleration can simultaneously take larger time steps than the microscopic time integrator, while obtaining a smaller error than approximate macroscopic models for the slow component of the system. 

In this work, we expand the convergence and stability study of the micro-macro acceleration method on linear slow-fast SDEs with additive noise. For these equations, we prove convergence to the microscopic dynamics for Gaussian initial conditions when only extrapolating the mean of the slow component. To this end, we look at the propagation of the mean and variance of the full microscopic state throughout the micro-macro acceleration scheme, only extrapolating the mean of the slow component of the SDE. We prove that, when the extrapolation time step goes to zero, these two first moments converge to the corresponding ones produced by the underlying Euler-Maruyama scheme. Dragging the microscopic time step to zero, we further obtain convergence to the exact dynamics of the slow-fast SDE. In contrast to the convergence result from~\cite{lelievre2018analysis}, this analysis does not require any hierarchy of macroscopic state variables; both the mean of the fast component and the variance of the SDE are never extrapolated.

We also present a stability result for non-Gaussian initial laws that complements the previous findings in the Gaussian framework~\cite{debrabant2018study}. For this part, we consider a class of (non-Gaussian) initial conditions having Gaussian tails. Inspired by the case of Gaussian initial conditions analyzed in~\cite{debrabant2018study}, where the explicit formulas are available, we prove that the stability of the micro-macro acceleration method hinges on the stability of the mean obtained from the matching procedure. In this case, however, due to the non-Gaussianity of distributions, we have to look at the propagation of all higher moments throughout the method.

These findings illustrate a certain robustness of the matching procedure: solely using the first moment, matching reconstructs the remaining higher moments, so that we converge to the microscopic scheme as extrapolation vanishes, and recover its invariant distribution as the number of fixed extrapolations grows to infinity. 

\vspace{2mm}
\paragraph{\textbf{Slow-fast linear SDEs with additive noise}}
The objects of interest in this manuscript are linear SDEs with additive noise, or Ohrnstein-Uhlenbeck processes, of the form
\begin{equation} \label{eq:linearsde}
dX_t = A X_t dt + B dW_t
\end{equation}
with a square drift matrix $A \in \mathbb{R}^{d \times d}$, a rectangular diffusion matrix $B \in \mathbb{R}^{d \times m}$ and the Wiener process $W_t \in \mathbb{R}^m$. There are a few reasons why linear SDEs with additive noise are useful to study. First, the dynamics of linear systems is well understood, and we can derive stronger convergence results of micro-macro acceleration on such systems. Second, Ohrnstein-Uhlenbeck processes have an invariant distribution. We can then investigate whether micro-macro acceleration converges in distribution to the correct equilibrium distribution, as was done in \cite{debrabant2018study} as a function of the time-scale separation and the extrapolation step size. Third, linear systems are popular in the context of ordinary differential equations to determine the stability of deterministic methods by an eigenvalue analysis of the drift matrix $A$. Although the concept of linearization is ambiguously defined in the stochastic case, linear SDEs with additive noise are useful to study in their own~right.

In this manuscript, we are concerned with linear SDEs with a time-scale separation between some slow and some fast variables. A spectral gap in the drift matrix $A$ is a good indication of a time-scale separation present in the linear system. The larger the gap, the more stiff the linear system \eqref{eq:linearsde} becomes. In the context of micro-macro acceleration, we are mainly interested in the evolution of some moments of the slow components of \eqref{eq:linearsde}. Using the spectral decomposition theorem, we introduce the orthogonal projections $\Pi^s \from \mathbb{R}^d \to \mathbb{R}^{d_s}$ and $\Pi^f \from \mathbb{R}^d \to \mathbb{R}^{d-d_s}$ that map the full state space onto the `slow'~$\R^{d_s}$ and `fast'~$\R^{d-d_s}$ state spaces, respectively, which correspond to the gap in the spectrum of $A$.  Such a procedure is also called `coarse-graining' \cite{debrabant2018study}. The decomposition is such that we can express the full state space and the drift matrix as
\[
\mathbb{R}^d = \mathbb{R}^{d_s} \oplus \mathbb{R}^{d-d_s},\quad A =  \Pi^s A^s \oplus \Pi^f A^f.
\]
 We aim at approximating the moments of the projected process $\Pi^s X_t$ as well as possible to compute the exact evolution of these moments with a reasonable accuracy. For the remainder of the manuscript, a superscript `s' denote the slow components and a superscript `f' the fast.


\vspace{2mm}
\paragraph{\textbf{Outline of the paper}}

The paper is organized as follows: in Section~\ref{sec:mM_acc}, we introduce the micro-macro acceleration algorithm, specifically in our context of linear SDEs with additive noise. 
Section~\ref{sec:convergence} contains the proof of convergence of micro-macro acceleration to the complete microscopic dynamics with only slow mean extrapolation. Section~\ref{sec:stability} investigates the stability of the method when the initial condition has Gaussian tails, when only extrapolating slow mean. In Section~\ref{sec:numerics}, we illustrate the theoretical results on convergence and stability with numerical examples. We consider a system of linear SDEs with additive noise, with an extra periodic force on the slow component.  Section~\ref{sec:conclusions} contains a concluding discussion.

\section{The micro-macro acceleration algorithm\label{sec:mM_acc}}

One cycle of the micro-macro acceleration consists of four parts: (i) a microscopic \textit{simulation} of the (stiff) stochastic differential equation over a short time interval, discretized with small time steps $\delta t$; (ii) \textit{restriction} or computing an estimate of the macroscopic state variables based on the microscopic ensembles from (i); (iii) \textit{extrapolation} of the restricted macroscopic state variables over a larger time step $\Delta t \gg \delta t$; (iv) \textit{matching} the extrapolated state variables onto a probability distribution that perturbs the final distribution from (i) minimally.

Matching is the hardest step of the micro-macro acceleration algorithm. During matching, we build a new probability distribution that is consistent with the extrapolated state variables. This problem is often \textit{ill-posed}, since there can be many probability distributions consistent with a given set of macroscopic state variables. Therefore, it was proposed in~\cite{debrabant2017micro} to find the distribution that minimizes the divergence with respect to a prior distribution $P$. Such a prior is naturally available as the final distribution from the simulation step of the micro-macro acceleration method. In this work, we use matching introduced in \cite{debrabant2017micro,lelievre2018analysis} and based on minimizing the Kullback-Leibler divergence (also called relative entropy)
\[
\mathcal{D}( Q || P ) = \mathbb{E}_Q\!\left[ \ln \left(\rnder{Q}{P} \right) \right],
\]
over all distributions $Q$ that are consistent with the extrapolated states.

In this section, we first derive some explicit formulas for this matching procedure for linear slow fast SDEs (Section~\ref{sec:mM_match}), after which we present the complete micro-macro acceleration method in full mathematical detail (Section~\ref{sec:complete_mM}).

\subsection{Matching with the slow mean\label{sec:mM_match}}

In this paper, we are particularly interested in the matching procedure that reconstructs a full microscopic distribution based only on the slow mean. For more general matching operators, see~\cite{debrabant2017micro,lelievre2018analysis}. In this case, denoting by $\bar{\mu}^s$ the mean of the slow component, the matching reads
\begin{equation} \label{eq:slowmeanmatching}
\mathcal{M}(\bar{\mu}^s, P) = \underset{Q \in \mathscr{P}}{\text{arg min}} \  \mathcal{D}( Q || P ), \ \ \textrm{s.t.} \ \ \mathbb{E}_Q[\Pi^s] = \bar{\mu}^s.
\end{equation}
The distribution $\bar{Q}$ solving~\eqref{eq:slowmeanmatching} is always absolutely continuous with respect to the prior $P$ and its density has exponential shape given by
\begin{equation*}
    \rnder{\bar{Q}}{P}(x) = \exp\!\big(\bar{\lambda}^s \cdot x - A(\bar{\lambda}^s, P)\big),
\end{equation*}
where the normalization constant (log-partition function) is $A(\lambda^s, P) = \ln \mathbb{E}_P\left[\exp\left(\lambda^s \cdot \Pi^s \right) \right]$. The optimal Lagrange multipliers $\bar{\lambda}^s \in \mathbb{R}^{d_s}$ are unique and fulfill~\cite{lelievre2018analysis}
\begin{equation} \label{eq:lagrangemultipliersmean}
\nabla_{\lambda_s} A(\bar{\lambda}^s, P) = \bar{\mu}_s.
\end{equation}
In particular, when the prior $P$ has density $\pi_P$ with respect to the Lebesgue measure on $\R^d$, so does $\bar{Q}$ and its density reads
\[
\pi_Q(x) = \exp\left(\bar{\lambda}^s \cdot x - A\left(\bar{\lambda}^s, P\right) \right) \pi_P(x).
\]
Moreover, when the prior $P$ is Gaussian, there is a closed expression for $\bar{Q}$, as will become clear in Lemma \ref{lem:linearmatchingmean} (Section~\ref{sec:conv_lem}).  When $P$ is not Gaussian, we need to resort to numerical methods to solve~\eqref{eq:lagrangemultipliersmean} for the Lagrange multipliers, see, e.g.,~\cite{debrabant2017micro}.

The following result connects matching of the full microscopic distribution $P$ with a given slow mean $\bar{\mu}^s$ to the corresponding matching procedure using the slow marginal $P^s$ of the microscopic distribution as prior.
\begin{proposition} \label{pro:faststates}
Let $\bar{Q}=\match(\bar{\mu}^s, P)$ be the solution to \eqref{eq:slowmeanmatching} and $\bar{Q}^s=\match(\oline{\mu}^s, P^s)$ be the solution to the matching of slow prior marginal $P^s$. Then, the matching densities satisfy
\begin{equation*}
    \rnder{\bar{Q}}{P}(y,z) = \rnder{\bar{Q}^s}{P^s}(y),\quad y\in\R^{d_s},z\in\R^{d_f},
\end{equation*}
and, in particular, all slow observables of $\bar{Q}$ equal the corresponding observables of $\bar{Q}^s$. Moreover, for any function $f$ on $\R^{d_f}$, the fast observable of $\bar{Q}$ generated by $f$ is given as
\begin{equation}\label{eq:match_fast_mean}
    \Exp[\bar{Q}][f(Z)] = \int_{\R^{d_s}}\Exp[P][f(Z)|Y=y]\,\bar{Q}^s(\der{y}).
\end{equation}
\end{proposition}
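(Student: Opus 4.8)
The plan is to show that, for the slow-mean matching, the Radon--Nikodym density of the solution with respect to the prior is a function of the slow variable alone, and that this function is insensitive to whether the prior is the full law $P$ or its slow marginal $P^s$. Writing a point of $\R^d=\R^{d_s}\oplus\R^{d_f}$ as $(y,z)$ and reading the pairing $\bar\lambda^s\cdot x$ as $\bar\lambda^s\cdot\Pi^s x=\bar\lambda^s\cdot y$, the exponential form of the density already exhibits its independence of $z$; the crux is then to identify the optimal Lagrange multipliers of the two matching problems.

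First I would observe that the log-partition function depends only on $P^s$. Since $\Pi^s$ is a function of the slow variable, integrating out $z$ gives
\[
A(\lambda^s,P)=\ln\mathbb{E}_P\!\left[\exp(\lambda^s\cdot\Pi^s)\right]=\ln\mathbb{E}_{P^s}\!\left[\exp(\lambda^s\cdot y)\right]=A(\lambda^s,P^s).
\]
Hence the maps $\lambda^s\mapsto A(\lambda^s,P)$ and $\lambda^s\mapsto A(\lambda^s,P^s)$ coincide, and so do their gradients. The defining equation \eqref{eq:lagrangemultipliersmean} for the optimal multiplier is therefore literally the same for both problems, and by the uniqueness of the minimizer the two multipliers agree; denote the common value by $\bar\lambda^s$. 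Substituting into the exponential density and using $A(\bar\lambda^s,P)=A(\bar\lambda^s,P^s)$ yields
\[
\rnder{\bar{Q}}{P}(y,z)=\exp\!\big(\bar\lambda^s\cdot y-A(\bar\lambda^s,P)\big)=\exp\!\big(\bar\lambda^s\cdot y-A(\bar\lambda^s,P^s)\big)=\rnder{\bar{Q}^s}{P^s}(y),
\]
which is the first assertion.

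The two consequences follow by integration. For a slow observable $g$ on $\R^{d_s}$, since the density does not depend on $z$ I can integrate it out against $P$ and replace $P$ by its marginal $P^s$, obtaining $\mathbb{E}_{\bar Q}[g(Y)]=\int g(y)\,\rnder{\bar{Q}^s}{P^s}(y)\,P^s(\mathrm{d}y)=\mathbb{E}_{\bar Q^s}[g(Y)]$; thus the slow marginal of $\bar Q$ is exactly $\bar Q^s$ and all slow observables coincide. For a fast observable generated by $f$ on $\R^{d_f}$ I would disintegrate $P$ along the slow variable, $P(\mathrm{d}y,\mathrm{d}z)=P(\mathrm{d}z\mid Y=y)\,P^s(\mathrm{d}y)$, and use the $z$-independence of the density once more:
\[
\mathbb{E}_{\bar Q}[f(Z)]=\int_{\R^{d_s}}\!\Big(\int_{\R^{d_f}} f(z)\,P(\mathrm{d}z\mid Y=y)\Big)\exp\!\big(\bar\lambda^s\cdot y-A(\bar\lambda^s,P)\big)\,P^s(\mathrm{d}y).
\]
Recognizing the inner integral as $\mathbb{E}_P[f(Z)\mid Y=y]$ and the remaining factor $\exp(\bar\lambda^s\cdot y-A(\bar\lambda^s,P))\,P^s(\mathrm{d}y)$ as $\bar Q^s(\mathrm{d}y)$ produces \eqref{eq:match_fast_mean}.

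I expect the only genuine difficulties to be bookkeeping rather than conceptual: correctly interpreting the pairing $\bar\lambda^s\cdot x$ as acting through $\Pi^s$, so that the density is manifestly a function of $y$ only; invoking uniqueness of the multiplier to glue the two problems together; and justifying the disintegration $P(\mathrm{d}y,\mathrm{d}z)=P(\mathrm{d}z\mid Y=y)\,P^s(\mathrm{d}y)$ together with the Fubini-type interchange needed to read off the conditional expectation. The main step to get right is the identification $\bar\lambda^s$ for $P$ equals $\bar\lambda^s$ for $P^s$, as everything else cascades from it. Notably, none of these steps uses Gaussianity of $P$, so the statement holds for the general priors that arise along the micro-macro iterations.
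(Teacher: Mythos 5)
Your proposal is correct and follows essentially the same route as the paper's proof: both arguments hinge on showing $A(\lambda^s,P)=A(\lambda^s,P^s)$ by disintegrating $P$ into its slow marginal and fast conditional, invoking uniqueness of the Lagrange multipliers to identify the two matching problems, and then reusing the disintegration to obtain the fast-observable formula \eqref{eq:match_fast_mean}. The only difference is cosmetic—you spell out the slow-observable consequence that the paper leaves implicit—so there is nothing to revise.
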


\begin{proof}
To see the first identity, consider the log-partition function $A(\la,P)$. Employing the decomposition of $P$ into its marginal and conditional~\cite[Thm.~10.2.1]{Dudley2002}, we compute
\begin{gather*}\label{eq:logpart_marg}
\begin{aligned}
A(\la, P)
&= \ln\int_{\R^d}\exp\!\big(\la\sdot\Pi^sx\big)\,P(\der{x})\\[.2em]
&= \ln\!\Big\{\int_{\R^{d_s}}\exp\!\big(\la\sdot y\big)\int_{\R^{d_f}}P^{f|s}(\der{z}|y)\,P^s(\der{y})\Big\}\\[.2em]
&= \ln\Exp[P^s]\big[\exp(\la\sdot\Pi^s)] = A(\la, P^s),
\end{aligned}
\end{gather*}
where $A(\la,P^s)$ is the log-partition function of the marginal prior $P^s$. Since both log-partition functions agree, the vector of Lagrange multipliers $\bar{\la}\in\R^{d_s}$ of $\bar{Q}$ corresponds exactly to the one of $\bar{Q}^s$. Therefore, we can write
$
\rnder{\bar{Q}}{P}(y,z) = \exp\big(\oline{\la}\sdot y - A(\bar{\la},P^s)\big),
$
which proves the first identity.

To unwrap the fast mean of $\bar{Q}$, let us first write
\begin{equation*}
    \Exp[\bar{Q}][f(Z)] = \int_{\R^{d}}f(z)\,\bar{Q}(\der{y},\der{z}) = \int_{\R^{d}}f(z)\rnder{\bar{Q}}{P}(y,z)\,P(\der{y},\der{z}).
\end{equation*}
Using the identity between the densities and~\cite[Thm.~10.2.1]{Dudley2002} once more, we arrive at
\begin{gather*}
\begin{aligned}
    \Exp[\bar{Q}][f(Z)]
    &= \int_{\R^{d_s}}\int_{\R^{d_f}}f(z)\rnder{\bar{Q}^s}{P^s}(y)\,P^{f|s}(\der{z}|y)\,P^s(\der{y})\\
    &= \int_{\R^{d_s}}\Big[\int_{\R^{d_f}}f(z)\,P^{f|s}(\der{z}|y)\Big]\rnder{\bar{Q}^s}{P^s}(y)P^s(\der{y}),
\end{aligned}
\end{gather*}
which concludes the proof of~\eqref{eq:match_fast_mean}.
\end{proof}

\subsection{The complete micro-macro acceleration method}\label{sec:complete_mM}

In this section, we describe the four steps of the micro-macro acceleration algorithm in detail. 
We first present the time discretization of the linear SDE~\eqref{eq:linearsde}. We further introduce the restriction operator together with linear extrapolation of the macroscopic state variables. Finally, we use the matching operators as discussed in the previous section.

Let $P_n$ be the probability distribution at time $t_n = n\Delta t$. The micro-macro acceleration algorithm advances the distribution $P_n$ to a distribution $P_{n+1}$ at time $t_{n+1}=t_n+\Delta t$ in four stages:

\paragraph{Step 1: Microscopic time integration} In the first step, we perform a simulation of \eqref{eq:linearsde} over a time window of size $\delta \tau$. The computational cost of time propagation is usually high and we choose $\delta \tau$ to be of the order of the stiffest part of \eqref{eq:linearsde}. In practice, we usually take $K$ time steps of size $\delta t$, such that $\delta \tau = K \delta t$. In this text, we use the Euler-Maruyama scheme to discretize \eqref{eq:linearsde}, reading
\begin{equation} \label{eq:eulermaruyama}
    X_{n,k+1} = \left(I + A \delta t \right) X_{n,k} + \sqrt{\delta t}B \delta W_{n,k},
\end{equation}
for $k=1,\dots,K$, and where $\delta W_{n,k-1}$ are Brownian increments. The random variables $X_{n,k}, \ k = 0,\dots, K$ have probability distributions $P_{n,k}$ and we denote the initial distribution as $P_{n,0} = P_n$.

\paragraph{Step 2: Restriction} Second, to transition from the full microscopic description to the macroscopic state variables, we compute the mean of the slow component of the process \eqref{eq:linearsde}, reading $\mathbb{E}_P\left[ \Pi^s \right]$. We restrict the slow mean at every microscopic time step, generating a sequence of values of the slow mean:
\[
\mu^s_{n,k} = \mathbb{E}[\Pi^s P_{n,k}], \ \ k = 0,\dots,K.
\]

\paragraph{Step 3: Extrapolation}
In the third step, we perform time integration on the macroscopic level over a time interval of size $\Delta t$. Given the slow means $\mu^s_{n,k}, \ k=0,\dots,K$ at times $t_n + k\delta t$ from the previous step, we compute the slow mean $\mu^s_{n+1}$ at time $t_{n+1} = t_n + \Delta t$ by linear extrapolation
\begin{equation} \label{eq:linearextrapolation}
\mu^s_{n+1} = \mu^s_n + \frac{\Delta t}{K \delta t} \left(\mu^s_{n,K} - \mu^s_n \right).
\end{equation}
Note that we only use the state variables at time $t_n$ and $t_n + K\delta t$.

\paragraph{Step 4: Matching} Finally, we construct a new probability distribution that is consistent with $\mu^s_{n+1}$. To this end, we employ the matching operator \eqref{eq:slowmeanmatching} and define
\[
P_{n+1} = \mathcal{M} \left(\mu^s_{n+1}, P_{n,K} \right),
\]
to obtain a new microscopic distribution $P_{n+1}$ at time $t_{n+1} = t_n+ \Delta t$. The prior distribution $P_{n,K}$ is the final distribution computed during Step~1.

\section{Convergence of the micro-macro acceleration method with slow mean extrapolation\label{sec:convergence}}

In this section, we prove that the micro-macro acceleration method of Section~\ref{sec:mM_acc} converges to the exact dynamics of the linear SDE \eqref{eq:linearsde}, when only extrapolating the mean of the slow process and when the initial condition is Gaussian (Theorem~\ref{thm:linearslowconvergence} in Section~\ref{sec:thm1}). Before proceeding to the proof, we need an intermediate result, that explicitly describes the evolution of the mean and variance of the full microscopic system under the micro-macro acceleration method. This intermediate result is the subject of Section~\ref{sec:conv_lem}.  Theorem~\ref{thm:linearslowconvergence} differs from the main convergence result in \cite{lelievre2018analysis}, as the latter requires a hierarchy of macroscopic state variables to form a complete description of the density it represents. The slow mean by itself never forms such a complete description of the underlying density. An extension of Theorem \ref{thm:linearslowconvergence} to non-linear SDEs or non-Gaussian initial conditions is highly non-trivial.

\subsection{An iterative formula for slow mean-only extrapolation\label{sec:conv_lem}}
The proof of the convergence result in Theorem~\ref{thm:linearslowconvergence} relies on an iterative formula that describes how the complete mean and variance propagate through one step of the micro-macro acceleration scheme. As the micro-macro method preserves the Gaussianity of the initial condition, which we show below, the knowledge of the mean and variance suffices to control the distribution throughout the whole simulation. The derivation here assumes only one Euler-Maruyama inner step of size $\delta t$ for simplicity, but can easily be extended to $K$ inner steps.

We start with a lemma, proven also in \cite{debrabant2018study} but by different means, that gives the matched distribution when the prior is Gaussian and we only match with the slow mean.

\begin{lemma} \label{lem:linearmatchingmean}
	Suppose $P$ is the prior Gaussian distribution with mean $\mu$ and covariance matrix $\Sigma$,
	\[
		\mu = \begin{bmatrix}	\mu^s\\ \mu^f \end{bmatrix}, \ \ \Sigma = \begin{bmatrix} \Sigma^s & C \\ C^T & \Sigma^f \end{bmatrix}.
	\] 
	 The distribution $\bar{Q}=\match(\bar{\mu}^s,P)$, which solves~\eqref{eq:slowmeanmatching}, is also Gaussian with the same variance and mean $\bar{\mu} = [\bar{\mu}^s, \bar{\mu}^f]^T$ where $\bar{\mu}^f = \mu^f + C^T \left( \Sigma^s \right)^{-1}(\bar{\mu}^s - \mu^s)$.
\end{lemma}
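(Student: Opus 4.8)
The approach is to exploit the exponential form of the matched density recalled just above the statement, together with the elementary fact that tilting a Gaussian by the exponential of a linear functional produces another Gaussian with the \emph{same} covariance and a shifted mean. Everything then reduces to reading off the shift block by block and closing it with the slow constraint.

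First I would recall that, since the prior $P$ is Gaussian, its Lebesgue density is proportional to $\exp\!\big(-\tfrac{1}{2}(x-\mu)^T\Sigma^{-1}(x-\mu)\big)$, and that the matched density reads $\frac{d\bar{Q}}{dP}(x)=\exp\!\big(\bar{\lambda}^s\cdot\Pi^s x - A(\bar{\lambda}^s,P)\big)$. Multiplying the two, the density of $\bar{Q}$ with respect to Lebesgue measure is the exponential of $-\tfrac{1}{2}(x-\mu)^T\Sigma^{-1}(x-\mu)+\ell\cdot x+\mathrm{const}$, where $\ell=[\bar{\lambda}^s,0]^T\in\mathbb{R}^d$ is the embedding of $\bar{\lambda}^s$ that acts only on the slow coordinates and vanishes on the fast block. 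Completing the square in $x$ shows the exponent is again quadratic with the unchanged leading term $-\tfrac{1}{2}x^T\Sigma^{-1}x$; hence $\bar{Q}$ is Gaussian with the same covariance $\Sigma$ and mean $\bar{\mu}=\mu+\Sigma\ell$.

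Next I would extract the mean block by block. Using the block form of $\Sigma$, one gets $\Sigma\ell=[\Sigma^s\bar{\lambda}^s,\,C^T\bar{\lambda}^s]^T$, so that $\bar{\mu}^s=\mu^s+\Sigma^s\bar{\lambda}^s$ and $\bar{\mu}^f=\mu^f+C^T\bar{\lambda}^s$. The slow constraint $\mathbb{E}_{\bar{Q}}[\Pi^s]=\bar{\mu}^s$, equivalently the multiplier equation~\eqref{eq:lagrangemultipliersmean}, forces $\bar{\lambda}^s=(\Sigma^s)^{-1}(\bar{\mu}^s-\mu^s)$, which exists and is unique because $\Sigma^s$ is invertible for a non-degenerate slow marginal; substituting into the fast block yields exactly $\bar{\mu}^f=\mu^f+C^T(\Sigma^s)^{-1}(\bar{\mu}^s-\mu^s)$.

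The computation is essentially routine once the tilting picture is in place, so I do not expect a deep obstacle; the two points that require care are (i) correctly embedding the slow multiplier $\bar{\lambda}^s$ into $\mathbb{R}^d$, since it couples into the fast mean only through the off-diagonal block $C^T$, and (ii) organizing the completion of the square so that the quadratic part is manifestly unchanged, which is what guarantees the covariance is preserved. As an alternative route that sidesteps the explicit density manipulation, I could invoke Proposition~\ref{pro:faststates}: the slow-marginal matching of the Gaussian $P^s=\mathcal{N}(\mu^s,\Sigma^s)$ is again Gaussian $\mathcal{N}(\bar{\mu}^s,\Sigma^s)$ by the same tilting argument applied on the slow block, and then~\eqref{eq:match_fast_mean} combined with the standard Gaussian conditional mean $\mathbb{E}_P[Z\mid Y=y]=\mu^f+C^T(\Sigma^s)^{-1}(y-\mu^s)$ delivers $\bar{\mu}^f$ after integrating in $y$ against $\bar{Q}^s$.
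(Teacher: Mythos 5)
Your proof is correct, but it follows a genuinely different route from the paper's. You tilt the full Gaussian Lebesgue density by $\exp(\bar{\lambda}^s\cdot\Pi^s x)$ and complete the square, identifying $\bar{Q}=\mathcal{N}(\mu+\Sigma\ell,\Sigma)$ with $\ell=[\bar{\lambda}^s,0]^T$ in one stroke; the slow constraint then pins down $\bar{\lambda}^s=(\Sigma^s)^{-1}(\bar{\mu}^s-\mu^s)$, and the block product $\Sigma\ell$ delivers the fast-mean shift $C^T\bar{\lambda}^s$. The paper instead argues entirely through Proposition~\ref{pro:faststates}: it quotes the standard fact that slow-marginal matching of $\mathcal{N}(\mu^s,\Sigma^s)$ yields $\mathcal{N}(\bar{\mu}^s,\Sigma^s)$, and then evaluates~\eqref{eq:match_fast_mean} with $f(z)=z$ and with $f(z)=(z-\bar{\mu}^f)(z-\bar{\mu}^f)^T$, using the Gaussian conditional mean and conditional variance, to obtain $\bar{\mu}^f$ and $\bar{\Sigma}^f=\Sigma^f$ --- this is essentially the alternative you sketch in your closing sentences. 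Your route buys two things: it produces the explicit multiplier $\bar{\lambda}^s=(\Sigma^s)^{-1}(\bar{\mu}^s-\mu^s)$ (the same object that appears in Corollary~\ref{co:match_mean_ndprior}), and it shows at once that the \emph{entire} covariance matrix, including the cross block $C$, is unchanged, whereas the paper's computation explicitly verifies only the diagonal blocks $\Sigma^s$ and $\Sigma^f$. What the paper's route buys is robustness to degeneracy: it manipulates only the slow marginal and the conditional laws, so it never needs the full $\Sigma$ to be invertible, while your completion of squares presupposes a non-degenerate prior with a Lebesgue density on all of $\mathbb{R}^d$ --- you flag invertibility of $\Sigma^s$ but not of $\Sigma$. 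That caveat is minor and repairable (e.g., by running the same tilting argument at the level of cumulant generating functions, as in Proposition~\ref{pro:cgf_prop_match}), so your proof stands as a sound, and in some respects tidier, alternative.
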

\begin{proof}
Let $\bar{Q}^s = \match(\bar{\mu}^s, P^s)$, where $P^s$ is the slow marginal of $P$. Since $P^s=\nd(\mu^s,\Sigma^s)$, by the Gaussianity of $P$, a standard result for Kullback-Leibler minimization states that $\bar{Q}^s$ is also Gaussian with the mean $\bar{\mu}^s$ and the same variance $\Sigma^s$. That $\bar{Q}$ is Gaussian and its slow variance equals $\Sigma^s$ follows directly from the expression connecting the Radon-Nikodym derivatives in Proposition~\ref{pro:faststates}.

To compute the fast mean $\bar{\mu}^f$ and variance $\bar{\Sigma}^f$ of $\bar{Q}$, we use the second part of Proposition~\ref{pro:faststates}. Focusing on $\bar{\mu}^f$ first, employing formula~\eqref{eq:match_fast_mean} with $f(z)=z$ and a well-known expression for the conditional mean for Gaussian distributions, we can write the matched fast mean as
\begin{equation*}
\bar{\mu}^f 
= \int_{\mathbb{R}^{d_s}} \big\{ \mu^f + C^T (\Sigma^s)^{-1}(y - \mu^s) \big\} \, \bar{Q}^s(dy)
= \mu^f + C^T (\Sigma^s)^{-1}(\bar{\mu}^s - \mu^s).
\end{equation*}
Similarly, we can express the fast matched variance by choosing $f(z) = (z-\bar{\mu}^f)(z-\bar{\mu}^f)^T$ in~\eqref{eq:match_fast_mean}
\begin{equation*}
\bar{\Sigma}^f = 
\int_{\mathbb{R}^{d_s}} \mathbb{E}_P[(Z-\bar{\mu}^f)(Z-\bar{\mu}^f)^T| Y=y] \ \bar{Q}^s(dy).
\end{equation*}
By adding and subtracting the fast conditional mean $\mu^{f|s}(y) = \mu^f + C^T \left(\Sigma^s \right)^{-1}(y-\mu^s)$ to each $Z-\bar{\mu}^f$, we get
\begin{align*}
\centering
\bar{\Sigma}^f 
&=\int_{\mathbb{R}^{d_s}} \big\{ \mathbb{E}_P[(Z-\mu^{f|s}(y))(Z-\mu^{f|s}(y))^T | Y = y] + (\mu^{f|s}(y)- \bar{\mu}^f)(\mu^{f|s}(y)- \bar{\mu}^f)^T\big\}\, \bar{Q}^s(dy).
\end{align*}
The first summand under the integral represents the fast conditional variance of $P$ and, since $P$ is Gaussian, it is independent of $y$ and equals $\Sigma^f - C^T \left(\Sigma^s \right)^{-1} C$. The second summand can be expanded using the expressions for $\mu^{f|s}(y)$ and $\bar{\mu}^f$. Writing it out, we obtain
\begin{align*}
\centering
\overbar{\Sigma}^f &= \Sigma^f - C^T \left(\Sigma^s \right)^{-1} C + \int_{\mathbb{R}^{d_s}} C^T \left(\Sigma^s \right)^{-1} (y-\bar{\mu}^s) (y-\bar{\mu}^s)^T \left (\Sigma^s \right)^{-1} C \, \bar{Q}^s(dy) \\
&= \Sigma^f - C^T \left(\Sigma^s \right)^{-1} C + C^T \left(\Sigma^s \right)^{-1} \Sigma^s \left( \Sigma^s \right)^{-1} C = \Sigma^f,
\end{align*}
where we used the fact that the slow variance of $\bar{Q}^s$ equals $\Sigma^s$.
\end{proof}

With Lemma~1, we are armed to obtain a closed expression for the time-discrete evolution of the mean of the slow-fast SDE~\eqref{eq:linearsde}, as generated by the micro-macro acceleration method of Section~\ref{sec:mM_acc}. Suppose that the drift matrix $A$ is given in block form
\[
	A = \begin{bmatrix} A^s & V \\ W & A^f \end{bmatrix}.
\]
and that at time $t_n = n \Delta t$ the distribution is Gaussian with mean $\mu_n$, and covariance $\Sigma_n$ with
\[
\mu_n = \begin{bmatrix}\mu^s_n\\ \mu^f_n \end{bmatrix}.
\]
We can then write each of the four steps of the algorithm in explicit form. First, we consider the microscopic simulation step. During one Euler-Maruyama step, a Monte Carlo particle $X_n$ is propagated as
\[
X_{n,1} = (I+A\delta t)X_n + \sqrt{\delta t}B \xi_n, \ \ \xi_n \sim \mathcal{N}(0,1),
\]
and the distribution of $X_{n,1}$ is also Gaussian.  Next, we perform the restriction step. By taking expectations, the mean $\mu_{n,1}$ and the covariance $\Sigma_{n,1}$ after the Euler-Maruyama step read
\begin{equation}\label{eq:em_meanvar_rec}
\mu_{n,1} = (I+A\delta t)\mu_n, \ \ \Sigma_{n,1} = (I+\delta t A)\Sigma_n(I+\delta t A)^T + \delta t BB^T,
\end{equation}
after which we extrapolate the slow mean as
\begin{equation} \label{eq:slowmeanpropagation}
\begin{split}
\mu^s_{n+1} &= \mu^s_n + \frac{\Delta t}{\delta t} \left ( \mu_{n,1}^s - \mu_n^s \right) \\
	&=  \mu^s_n + \frac{\Delta t}{\delta t} \left ( \mu^s_n + \delta t A^s \mu^s_n + \delta tV \mu^f_n - \mu^s_n \right) \\
	&= (I^s + \Delta t A^s) \mu_n^s + \Delta t V \mu^f_n.
\end{split}
\end{equation}

Now that we have the extrapolated slow mean, we can use Lemma~\ref{lem:linearmatchingmean} to explicitly obtain the result of matching. According to Lemma~\ref{lem:linearmatchingmean}, the matched distribution is also Gaussian when only extrapolating the slow mean. Furthermore, the covariance matrix  is not affected by matching, i.e., $\Sigma_{n+1}=\Sigma_{n,1}$, and the fast mean is given by
\begin{equation} \label{eq:fastmeanpropagation}
\begin{split}
\mu^f_{n+1} &= \mu^f_{n,1} + C^T_{n,1} \left(\Sigma_{n,1}^s \right)^{-1} \left(\mu^s_{n+1} - \mu^s_{n,1} \right) \\
	&= \delta t W \mu^s_n + (I^f+\delta t A^f)\mu^f_n + C^T_{n,1} \left(\Sigma_{n,1}^s \right)^{-1} \left((\Delta t - \delta t) A^s \mu^s_n + (\Delta t-\delta t)V \mu^f_n \right) \\
	&= \left[ \delta t W + (\Delta t-\delta t) C^T_{n,1} \left(\Sigma_{n,1}^s \right)^{-1}  A^s \right] \mu^s_n + \left[ I^f + \delta t A^f + (\Delta t-\delta t) C^T_{n,1} \left(\Sigma_{n,1}^s\right)^{-1} V \right] \mu^f_n.
\end{split}
\end{equation}
Bundling the propagation of the slow \eqref{eq:slowmeanpropagation} and fast matched mean \eqref{eq:fastmeanpropagation} in one vector $\mu_{n+1}$ gives
\begin{equation} \label{eq:meanpropagation}
\begin{bmatrix} \mu^s_{n+1} \\ \mu^f_{n+1} \end{bmatrix}= \begin{bmatrix} I^s + \Delta t A^s &  \Delta t V \\  \delta t W + (\Delta t-\delta t) C^T_{n,1} \left(\Sigma_{n,1}^s \right)^{-1}  A^s & I^f + \delta t A^f + (\Delta t-\delta t) C^T_{n,1} \left(\Sigma_{n,1}^s\right)^{-1} V  \end{bmatrix} \begin{bmatrix} \mu^s_{n} \\ \mu^f_{n} \end{bmatrix}.
\end{equation}
To conclude, the time-discrete evolution of the mean of the slow-fast SDE~\eqref{eq:linearsde}, as generated by the micro-macro acceleration method is given by the time-dependent linear system~\eqref{eq:meanpropagation}, with initial condition equal to the mean of the initial distribution of \eqref{eq:linearsde}.

\subsection{Convergence theorem \label{sec:thm1}}
All elements are now in place to prove convergence of the micro-macro acceleration method of Section~\ref{sec:mM_acc} that only extrapolates the slow mean of the process. The proof makes use of the iterative formula above, and holds for general linear SDEs with additive noise.

\begin{theorem} \label{thm:linearslowconvergence}
	Given a linear SDE with a Gaussian initial distribution, consider the micro-macro acceleration algorithm with relative-entropy matching and slow-mean extrapolation. Also, fix an end time $T > 0$. Denote by $P_T$ the exact distribution of the linear SDE at time $T$, and by $P_{n_{\Delta t}(T)}$ the distribution obtained using $n_{\Delta t}(T)$ steps of the micro-macro acceleration scheme of Section~\ref{sec:mM_acc} with $K=1$, where $n_{\Delta t}(T) = \lfloor T/\Delta t\rfloor$. Then,
	\begin{equation} \label{eq:proofobjective}
	\underset{\delta t \to 0}{\lim}	\ \underset{\Delta t \to \delta t}{\lim} \	
	\mathcal{D}(P_{n_{\Delta t}(T)} \  || \ P_T) = 0.
	\end{equation}
	As a consequence, the distributions $P_{n_{\Delta t}(T)}$ obtained by micro-macro acceleration at time $T$ converge in total variation to the exact microscopic distribution $P_T$ in the same limits.
\end{theorem}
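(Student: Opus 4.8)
The plan is to exploit that, since the initial law is Gaussian and the scheme preserves Gaussianity (Lemma~\ref{lem:linearmatchingmean} applied inductively through the recursions~\eqref{eq:em_meanvar_rec} and~\eqref{eq:meanpropagation}), the approximation $P_{n_{\Delta t}(T)}$ is Gaussian, and so is the exact law $P_T$ of the Ornstein--Uhlenbeck process~\eqref{eq:linearsde}. Writing $P_{n_{\Delta t}(T)}=\nd(\mu_N,\Sigma_N)$ and $P_T=\nd(\mu_T,\Sigma_T)$ with $N=n_{\Delta t}(T)$, I would insert the closed form for the relative entropy of two Gaussians,
\[
\kld\big(\nd(\mu_N,\Sigma_N)\,\big\|\,\nd(\mu_T,\Sigma_T)\big)=\tfrac12\Big[\,\mathrm{tr}(\Sigma_T^{-1}\Sigma_N)-d+(\mu_T-\mu_N)^T\Sigma_T^{-1}(\mu_T-\mu_N)+\ln\tfrac{\det\Sigma_T}{\det\Sigma_N}\,\Big],
\]
which is continuous in $(\mu_N,\Sigma_N)$ and vanishes as $\mu_N\to\mu_T$ and $\Sigma_N\to\Sigma_T$, provided $\Sigma_T$ is positive definite. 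Thus it suffices to establish these two convergences under the iterated limit of~\eqref{eq:proofobjective}.

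For the covariance, the decisive observation is that matching does not alter it, so $\Sigma_{n+1}=\Sigma_{n,1}$ and $\Sigma_N$ obeys \emph{exactly} the Euler--Maruyama recursion~\eqref{eq:em_meanvar_rec} with micro-step $\delta t$, independently of $\Delta t$. Hence $\Sigma_N$ is precisely the covariance produced by $n_{\Delta t}(T)$ pure Euler--Maruyama steps of size $\delta t$. In the inner limit $\Delta t\to\delta t$ the step count stabilises at $N=\lfloor T/\delta t\rfloor$, and then letting $\delta t\to0$ with $N\delta t\to T$, the standard consistency of Euler--Maruyama for linear SDEs yields $\Sigma_N\to\Sigma_T$, the solution at time $T$ of the Lyapunov equation $\dot\Sigma=A\Sigma+\Sigma A^T+BB^T$.

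For the mean I would use the explicit propagation matrix in~\eqref{eq:meanpropagation}: every off-diagonal correction carries the prefactor $(\Delta t-\delta t)$, so in the inner limit $\Delta t\to\delta t$ each one-step map converges to $I+\delta t A$, the Euler--Maruyama mean map. Since the step count is then the fixed finite integer $N$, the product of these maps converges to $(I+\delta t A)^N$ and $\mu_N\to(I+\delta t A)^N\mu_0$; letting $\delta t\to0$ with $N\delta t\to T$ gives $(I+\delta t A)^N\to e^{AT}$, whence $\mu_N\to e^{AT}\mu_0=\mu_T$. Substituting both convergences into the displayed relative-entropy formula proves~\eqref{eq:proofobjective}, and the total-variation statement follows from Pinsker's inequality $\norm{P_{n_{\Delta t}(T)}-P_T}_{\mathrm{TV}}\le\sqrt{\tfrac12\,\kld(P_{n_{\Delta t}(T)}\,\|\,P_T)}$.

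The main obstacle is the technical control needed to make the relative-entropy formula behave under the double limit: one must keep the covariances $\Sigma_N$ and the limit $\Sigma_T$ uniformly positive definite, so that $\Sigma_T^{-1}$ and $\ln\det\Sigma_N$ stay bounded and continuous. This requires propagating positive definiteness through~\eqref{eq:em_meanvar_rec} (using that $I+\delta t A$ is invertible for small $\delta t$ and $BB^T\succeq0$) and, in particular, keeping the slow marginal $\Sigma_{n,1}^s$ invertible so that the matching formulas and~\eqref{eq:meanpropagation} are well defined throughout. A secondary subtlety is the \emph{ordering} of the limits: it is exactly the inner limit $\Delta t\to\delta t$ that collapses the scheme onto pure Euler--Maruyama and synchronises the micro-evolution time $N\delta t$ with the end time $T$; without this ordering the covariance would lag real time and fail to reach $\Sigma_T$. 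The floor in $n_{\Delta t}(T)$ should also be dealt with, for instance by noting that $\lfloor T/\Delta t\rfloor$ is locally constant near generic $\delta t$.
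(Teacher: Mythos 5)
Your proposal is correct and follows essentially the same route as the paper's proof: Gaussianity preservation, the closed-form Gaussian relative entropy, the inner limit $\Delta t \to \delta t$ collapsing the mean propagation~\eqref{eq:meanpropagation} onto the Euler--Maruyama map (with the covariance already being exactly the Euler--Maruyama covariance, since matching leaves it unchanged), the outer limit $\delta t \to 0$ invoking consistency of Euler--Maruyama, and Pinsker's inequality for the total-variation claim. Your added remarks on uniform positive definiteness of the covariances, the synchronisation role of the limit ordering, and the floor function are points the paper's proof leaves implicit, but they refine rather than change the argument.
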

\begin{proof}
	Since the initial condition is Gaussian, all intermediate distributions of the exact solution, the Euler-Maruyama method, and micro-macro acceleration are Gaussian too. By a standard expression for the Kullback-Leibler divergence between two Gaussian distributions \cite{hershey2007approximating}, the divergence in~\eqref{eq:proofobjective} becomes
	\begin{equation*}
	\begin{aligned}
	\mathcal{D}(P_{n_{\Delta t}(T)}\,||\, P_T) &= \frac{1}{2} \Big( \ln\frac{\left|\Sigma_T\right|}{\left|\Sigma_{n_{\Delta t}(T)}\right|} - d + \text{Tr}\big(\Sigma_T^{-1} \Sigma_{n_{\Delta t}(T)}\big) + (\mu_T - \mu_{n_{\Delta t}(T)})^T \Sigma_T^{-1}(\mu_T - \mu_{n_{\Delta t}(T)})\Big),
	\end{aligned}
	\end{equation*}
	where $\mu_T$ and $\Sigma_T$ are the mean and variance of $X_T$, and $\mu_{n_{\Delta t}(T)}$ and $\Sigma_{n_{\Delta t}(T)}$ are the mean and variance of $X_{n_{\De t}(T)}$. 
	
	First, we fix $\delta t \leq \Delta t$ and let $\Delta t$ decrease to $\delta t$. If we perform back-substitution in equation \eqref{eq:meanpropagation} to write the mean at time $T$ as a function of the initial mean vector, we obtain a product of $n_{\Delta t}(T)$ different matrices. The number of matrices increases to $n_{\delta t}(T)$ as $\Delta t$ decreases to $\delta t$, but there always remain a finite number of matrices because $\delta t > 0$. The contribution of the largest off-diagonal term in \eqref{eq:meanpropagation} also reduces to zero and as a result, the mean vector $\mu_{n_{\Delta t}(T)}$ approaches the respective mean $\mu_{n_{\delta t}(T)}$ of the Euler-Maruyama scheme. We obtain
	\begin{equation*} \label{eq:iterativewrittenout}
	\begin{aligned}
	\lim\limits_{\Delta t \to \delta t} \begin{bmatrix} \mu^s_{n(\Delta t)} \\ \mu^f_{n(\Delta t)}\end{bmatrix} = \begin{bmatrix} I^s + \delta t A^s &  \delta t V \\  \delta t W  & I^f + \delta t A^f  \end{bmatrix} ^{n(\delta t)} \begin{bmatrix} \mu^s_0 \\ \mu^f_0 \end{bmatrix},
	\end{aligned}
	\end{equation*}
	where $\mu_0^s$ and $\mu_0^f$ are the slow and fast mean respectively of the initial condition. Similarly, since the variance stays constant during matching, $\Sigma_{n_{\De t}(T)}$ converges to $\Sigma_{n_{\de t}(T)}$, the variance of the Euler-Maruyama scheme. Hence, the limit \eqref{eq:proofobjective} reduces to 
	\begin{equation*}
	\begin{aligned}
	\underset{\delta t \to 0} {\lim}\  
	\frac{1}{2} \Big( \ln\frac{\left|\Sigma_T\right|}{\left|\Sigma_{n_{\delta t}(T)}\right|} - d + \text{Tr}\big(\Sigma_T^{-1} \Sigma_{n_{\delta t}(T)}\big)  
	+ (\mu_T - \mu_{n_{\delta t}(T)})^T \Sigma_T^{-1}(\mu_T - \mu_{n_{\delta t}(T)})\Big),
	\end{aligned}
	\end{equation*}
	which is the relative entropy between the Euler-Maruyama scheme and the exact solution at time $T$. 
	This expression converges to zero because, as $\delta t$ decreases to zero, the mean and variance of the Euler-Maruyama method converge to their respective values of the exact solution at time $T$. Finally, by Pinsker's inequality, $P_{n_{\De t}(T)}$ converges to $P_T$ in total variation.
\end{proof}

\noindent
Theorem \ref{thm:linearslowconvergence} might be surprising, since it does not require the number of macroscopic state variables to increase to infinity as the extrapolation time step $\Delta t$ decreases; using only the slow mean as a macroscopic state variable is sufficient. However, the result only holds for linear SDEs with Gaussian initial conditions, as the proof relies heavily on iteration \eqref{eq:meanpropagation}. At the moment, no proof exists on convergence for non-linear SDE with any (fixed) finite number of macroscopic state variables.

\section{Stability of micro-macro acceleration with initial condition with Gaussian tails}\label{sec:stability}

In this section, we study the stability of micro-macro acceleration when applied to~\eqref{eq:linearsde}, i.e., the convergence of the laws it generates, in the limit as the number of extrapolation with fixed step size $\Delta t$ goes to infinity, to the invariant distribution of the underlying Euler-Maruyama scheme. When $\de t$ denotes the microscopic step, the invariant distribution of the Euler-Maruyama scheme is the zero-mean normal distribution $\nd_{0,V_\infty^{\de t}}$, with variance
\begin{equation}\label{eq:invEM_var}
    V_\infty^{\de t} = \de t \sum_{j=0}^{\infty}(I+\de t A)^j B\tp{B}(I+\de t\tp{A})^j,
\end{equation}
as can be seen by repeatedly applying the recursion for the mean and variance in~\eqref{eq:em_meanvar_rec}.

The stability question was analyzed in~\cite{debrabant2018study} in the Gaussian setting, where the distributions that are generated by the micro-macro acceleration can be computed explicitly. The main result there focuses on a simple diagonal case and reads:
\begin{theorem}\label{thm:stabdiag}
When applying the micro-macro acceleration method to the linear SDE~\eqref{eq:linearsde} with block-diagonal drift matrix
\begin{equation*}
    A = \begin{bmatrix} A^s & 0 \\ 0 & A^f \end{bmatrix}
\end{equation*}
the mean $\mu_n$ and the covariance matrix $\Si_n$ of the resulting Gaussian law at the $n$th step satisfy
\begin{equation*}
    \lim_{n\to\infty}\mu_n=0,\quad\lim_{n\to\infty}\Si_n=V_\infty^{\de t},
\end{equation*}
whenever
\begin{equation}\label{eq:sprbound}
    \spr{I^s + \De tA^s}<1
    \quad\text{and}\quad
    \spr{I^s + \de tA^f}<1.
\end{equation}
\end{theorem}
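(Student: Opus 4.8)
The plan is to exploit the fact that, by Lemma~\ref{lem:linearmatchingmean}, matching never alters the covariance and preserves Gaussianity, so the mean and the covariance can be tracked separately. The covariance therefore evolves by the \emph{pure} Euler--Maruyama recursion $\Sigma_{n+1}=(I+\de tA)\Sigma_n(I+\de tA)\tp{}+\de t B\tp{B}$, entirely decoupled from the mean and from the extrapolation step, while the mean obeys the time-dependent linear system~\eqref{eq:meanpropagation}. Specialising that system to the block-diagonal drift (so that $V=0$ and $W=0$) collapses the slow--fast coupling in the mean to a single lower-triangular term fed by the covariance cross-block. Thus the scheme splits into three essentially independent convergence problems: the covariance, the slow mean, and the fast mean.

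For the covariance, I would first write $I+\de tA=\diag(M^s,M^f)$ with $M^s=I^s+\de tA^s$ and $M^f=I^f+\de tA^f$, and split $B\tp{B}$ into its slow, cross, and fast blocks. The recursion then separates into three affine (Stein-type) recursions for $\Sigma^s_n$, $C_n$, and $\Sigma^f_n$, governed by the linear maps $X\mapsto M^sX\tp{(M^s)}$, $X\mapsto M^sX\tp{(M^f)}$, and $X\mapsto M^fX\tp{(M^f)}$ respectively. Under $\spr{M^s}<1$ and $\spr{M^f}<1$ each map is a contraction in a suitable norm, its spectral radius being $\spr{M^s}^2$, $\spr{M^s}\spr{M^f}$, and $\spr{M^f}^2$, so each recursion has a unique fixed point to which it converges geometrically; assembling the three limits reproduces exactly the closed form $V_\infty^{\de t}$ of~\eqref{eq:invEM_var}. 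I would flag here that the covariance limit needs the \emph{microscopic} stability $\spr{M^s}<1$; under $\de t<\De t$ this is consistent with the stated hypothesis $\spr{I^s+\De tA^s}<1$ and is in any case necessary for $V_\infty^{\de t}$ to exist at all.

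For the mean, the block-diagonal structure decouples the slow component, $\mu^s_{n+1}=(I^s+\De tA^s)\mu^s_n$, so $\mu^s_n\to0$ geometrically under the first hypothesis $\spr{I^s+\De tA^s}<1$. The fast mean then satisfies $\mu^f_{n+1}=(I^f+\de tA^f)\mu^f_n+g_n$ with forcing $g_n=(\De t-\de t)\,\tp{C_{n,1}}(\Sigma^s_{n,1})^{-1}A^s\mu^s_n$. Since $C_{n,1}$ and $\Sigma^s_{n,1}$ are covariance blocks along the convergent Euler--Maruyama recursion, they converge to bounded limits with $\Sigma^s_{n,1}$ positive definite, so $(\Sigma^s_{n,1})^{-1}$ stays bounded; combined with $\mu^s_n\to0$ this forces $g_n\to0$. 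The conclusion $\mu^f_n\to0$ then follows from the elementary fact that a recursion $x_{n+1}=Mx_n+g_n$ with $\spr{M}<1$ and $g_n\to0$ has $x_n\to0$ (choose a norm with $\norm{M}<1$ and estimate the convolution sum, splitting it into a geometrically small tail of powers of $M$ and a vanishing head of $g_n$).

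The main obstacle is precisely this fast mean: its convergence cannot be read off directly because its forcing term couples it to both the evolving covariance and the slow mean, so the argument must be sequenced — first pin down the covariance limit and the boundedness of $(\Sigma^s_{n,1})^{-1}$, then use $\mu^s_n\to0$ to kill the forcing, and only then invoke the stable-recursion-with-vanishing-input lemma. A minor technical point to flag is ensuring $\Sigma^s_{n,1}$ remains uniformly nonsingular, so that the matching and hence the lower-triangular coupling term are well defined along the whole trajectory; this is guaranteed by positive definiteness of the slow covariance block once the noise acts nondegenerately on the slow space.
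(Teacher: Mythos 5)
Your proof is correct and is essentially the argument the paper has in mind: the paper itself states Theorem~\ref{thm:stabdiag} as a result quoted from~\cite{debrabant2018study}, giving no proof beyond the remark that it suffices to establish the asymptotic behaviour of the means and covariances of the Gaussian laws, and your proposal is a correct, self-contained realization of exactly that program --- Lemma~\ref{lem:linearmatchingmean} decouples the covariance (a pure Euler--Maruyama Stein recursion, contracting under~\eqref{eq:sprbound}) from the mean (formula~\eqref{eq:meanpropagation} specialised to $V=W=0$), with the slow mean decaying geometrically and the fast mean handled by the stable-recursion-with-vanishing-forcing lemma in the order you describe. Two small sharpenings: (i) the microscopic slow stability $\spr{I^s+\de t A^s}<1$ is not merely ``consistent with'' the hypothesis but implied by it, since for any eigenvalue $\lambda$ of $A^s$ the condition $|1+\De t\lambda|<1$ is equivalent to $2\,\mathrm{Re}\,\lambda+\De t|\lambda|^2<0$, which forces $2\,\mathrm{Re}\,\lambda+\de t|\lambda|^2<0$ for all $0<\de t\le\De t$, so no additional assumption is needed there; (ii) the uniform invertibility of $\Sigma^s_{n,1}$ that you flag is indeed an implicit nondegeneracy assumption, but it is one the paper already makes in Lemma~\ref{lem:linearmatchingmean}, where $(\Sigma^s)^{-1}$ appears without comment.
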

Here, $\spr{\cdot}$ denotes the \emph{spectral radius} of a matrix. Condition~\eqref{eq:sprbound} is necessary to stabilize the extrapolation of the slow mean $\mu_n^s$, by bounding the values of $\De t$, and to stabilize the Euler-Maryuama stage, by bounding the values of $\de t$.  For the proof of Theorem~\ref{thm:stabdiag}, it suffices to establish the proper asymptotic behaviour of the means $\mu_n$ and variances $\Sigma_n$ of Gaussian distributions generated by the micro-macro acceleration scheme.

In this section, we go beyond the Gaussian case and work within the larger class of probability measures that have Gaussian tails. As a consequence, we do not have explicit formulas for the distributions generated by the scheme and we have to control all  moments to show stability. Rather than trying to obtain stability bounds on $\De t$ directly, we concentrate on showing that convergence of $\mu^n$ to $0$, i.e., asymptotic stability of the mean, already yields  convergence of the distributions to $\nd_{0,V_\infty^{\de t}}$. The relation between these two notions was illuminated in the Gaussian setting in \cite{debrabant2018study}. 

The main stability result of this paper, Theorem~\ref{th:mM_mean_stab} in Section~\ref{sec:stab_result}, gives  weak convergence of distributions produced by the micro-macro acceleration method to $\nd_{0,V_\infty^{\de t}}$, as the number of extrapolation steps goes to infinity. To prove it, we  explore  the properties of \emph{cumulant generating functions} (CGFs) to produce a recursion formula for the laws generated by the method (Section~\ref{sec:cdf}).


\subsection{Micro-macro step in terms of the cumulant generating function\label{sec:cdf}}

Let us first define the cumulant generating function as follows.
\begin{definition}\label{dfn:cgf}
For any probability distribution $P\in\prob^d$, we define the \emph{cumulant generating function} of $P$
\begin{equation*}
\cgf_{P}(\vth) = \ln\Exp[P]\big[e^{\vth\sdot\Pi}\big],\quad \vth\in\Th_{P},
\end{equation*}
where $\Pi$ is the identity on $\R^d$, and the \emph{effective domain} reads $\Th_{P} = \{\vth\in\R^d:\ \cgf_{P}(\vth)<+\infty\}$.
\end{definition}
When $X\sim P$ we also write $\cgf_X$ instead of $\cgf_P$, and $\Th_X$ instead of $\Th_P$. We summarize the basic properties of CGFs in~\ref{se:cgf}.
\begin{example}\label{ex:cgf_nd}
If $X\sim\nd_{\mu,\Si}$, then
\begin{equation*}
\cgf_X(\vth) = \mu\sdot\vth + \frac{1}{2}\tp{\vth}\Si\vth.\qedhere
\end{equation*}
\end{example}

To effectively use the CGFs to describe the micro-macro acceleration procedure, we assume that for the initial random variable $X_0$ it holds $\Th_0\doteq\Th_{X_0}=\R^d$. In view of Proposition~\ref{pro:cgf_prop_match}, the CGF of the matched distribution results from shifting and translating the CGF of the prior based on the current value of the Lagrange multipliers. Having priors with full effective domain avoids the issue of falling outside the effective domain while shifting the CGF -- a clear sign that the matching is impossible.

\begin{remark}[What does $\Th_0=\R^d$ mean?]\label{re:cgf_fulldom}
Let us fix $\vth\in\R^d$ and $r>0$. According to the Chernoff's bound~\cite[p.~392]{cover2012elements}, we have
\begin{equation}\label{eq:chernoff}
\mbb{P}(\vth\sdot X_0\geq r) \leq e^{-sr + \cgf_{\vth\sdot X_0}(s)},
\end{equation}
for all $s\geq0$. From Proposition~\ref{pro:cgf_prop}(\ref{pro:cgf_prop_lintran}) in the Appendix, applied with $l=1$ and $M=\tp{\vth}$, we have $\cgf_{\vth\sdot X_0}(s) =\cgf_{X_0}(s\vth)$. Thus, taking logarithms on both sides of~\eqref{eq:chernoff}, we can equivalently write
\begin{equation*}
-\ln\mbb{P}(\vth\sdot X_0\geq r)\geq sr - \cgf_0(s\vth),
\end{equation*}
where we denote $\cgf_0=\cgf_{X_0}$. Since $\cgf_0(s\vth)$ is finite for all $s\geq0$, dividing by $r$ and taking the limit gives
\begin{equation*}
\lim_{r\to+\infty}\frac{-\ln\mbb{P}(\vth\sdot X_0\geq r)}{r} \geq s.
\end{equation*}
Therefore, because $s$ can be arbitrarily large, the log-tail function of $\vth\sdot X_0$ is superlinear at $+\infty$. The same holds at $-\infty$ by repeating the argument for $\mbb{P}(\vth\sdot X_0\leq -r)$.
\end{remark}

To simplify the notation, we again use only one micro step for each extrapolation in the micro-macro acceleration procedure. In the Proposition below, we consider the micro-macro acceleration method as applied to the linear slow-fast SDE~\eqref{eq:linearsde}.
\begin{proposition}
Let $n\geq1$ and assume that for a~random variable $X_{n-1}$ with cumulant generating function $\cgf_{X_{n-1}}=\cgf_{n-1}$ we have $\Th_{n-1}=\R^d$. Then, if $X_n$ is obtained from the micro-macro procedure with extrapolation of the (slow) $s$-marginal mean (as described in Section~\ref{sec:complete_mM}, with $K=1$), its CGF $\cgf_n=\cgf_{X_n}$ has effective domain $\Th_n=\R^d$ and satisfies
\begin{gather}\label{eq:mM_cum}
\begin{aligned}
\cgf_{n}(\vth)
&= \cgf_{n-1}\big((I+\de t\tp{A})(\vth+\la^s_n\oplus0^f)\big) - \cgf_{n-1}\big((\Id+\de t\tp{A})(\la^s_n\oplus0^f)\big)\\[0.2em]
&\hphantom{=\ }+ \frac{\de t}{2}\big[\tp{\vth}\!B\tp{B}\vth+ \tp{(\la^s_n\oplus0^f)}\!B\tp{B}\vth + \tp{\vth}\!B\tp{B}(\la^s_n\oplus0^f)\big],
\end{aligned}
\end{gather}
where $\la^s_n$ is a~vector of Lagrange multipliers.
\end{proposition}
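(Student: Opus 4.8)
The plan is to split one micro-macro step into its two nontrivial substages --- a single Euler--Maruyama update followed by slow-mean matching --- and to push the cumulant generating function through each, exploiting that both act transparently on CGFs. Write $X_{n-1,1}=(\Id+\de t A)X_{n-1}+\sqrt{\de t}\,B\xi$ for the post-micro-step variable, with law $P_{n-1,1}$ and CGF $\cgf_{n-1,1}$, and abbreviate the zero-padded multiplier by $v_n=\la^s_n\oplus0^f\in\R^d$, so that $\la^s_n\sdot\Pi^s x = v_n\sdot x$. With this notation the slow-mean log-partition function of the prior is nothing but its CGF evaluated at $v_n$, namely $A(\la^s_n,P_{n-1,1})=\cgf_{n-1,1}(v_n)$, a reduction I would record at the outset.

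First I would compute $\cgf_{n-1,1}$. Since $\xi\sim\nd_{0,\Id}$ is independent of $X_{n-1}$, the CGF of the sum splits additively; the linear-transformation property of Proposition~\ref{pro:cgf_prop}(\ref{pro:cgf_prop_lintran}) sends the drift term to $\cgf_{n-1}\big((\Id+\de t\tp{A})\vth\big)$, and the Gaussian CGF of Example~\ref{ex:cgf_nd} turns the noise term into the quadratic $\tfrac{\de t}{2}\tp{\vth}B\tp{B}\vth$. This gives
\begin{equation*}
\cgf_{n-1,1}(\vth)=\cgf_{n-1}\big((\Id+\de t\tp{A})\vth\big)+\tfrac{\de t}{2}\tp{\vth}B\tp{B}\vth .
\end{equation*}
Because $\Th_{n-1}=\R^d$ and the added term is a finite polynomial, $P_{n-1,1}$ inherits the full effective domain.

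Next I would read matching as an exponential tilt. By the density formula for $\match(\mu^s_n,P_{n-1,1})$, the law of $X_n$ has Radon--Nikodym derivative $\exp(v_n\sdot x - A(\la^s_n,P_{n-1,1}))$ against $P_{n-1,1}$; substituting this into the definition of $\cgf_n$ and pulling out the constant yields the shift-minus-normalization identity
\begin{equation*}
\cgf_n(\vth)=\cgf_{n-1,1}(\vth+v_n)-\cgf_{n-1,1}(v_n).
\end{equation*}
Inserting the expression for $\cgf_{n-1,1}$ from the previous step and expanding the quadratic difference $\tp{(\vth+v_n)}B\tp{B}(\vth+v_n)-\tp{v_n}B\tp{B}v_n$ into $\tp{\vth}B\tp{B}\vth+\tp{v_n}B\tp{B}\vth+\tp{\vth}B\tp{B}v_n$ reproduces~\eqref{eq:mM_cum} term by term. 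Finiteness of $\cgf_{n-1,1}$ everywhere makes both evaluations finite for every $\vth$, so $\Th_n=\R^d$, which closes the induction on the effective domain.

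The main obstacle is not the algebra but the \emph{well-posedness} of the matching substage --- the existence of a finite multiplier $\la^s_n$ that realizes the extrapolated slow mean. This is exactly where the hypothesis $\Th_{n-1}=\R^d$ is needed: via the superlinear log-tail decay established in Remark~\ref{re:cgf_fulldom}, the gradient of the slow log-partition function sweeps out all attainable slow means, so the tilt by $v_n$ never leaves the effective domain. Granting the existence and uniqueness of $\bar{\la}^s$ recorded after~\eqref{eq:lagrangemultipliersmean}, the rest is the routine CGF bookkeeping above.
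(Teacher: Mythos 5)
Your proposal is correct and follows essentially the same route as the paper: compute the Euler--Maruyama CGF via independence, the linear-transformation property, and the Gaussian CGF of the noise, then apply the matching shift-minus-normalization identity and expand the quadratic term. The only cosmetic difference is that you rederive the tilt identity $\cgf_n(\vth)=\cgf_{n-1,1}(\vth+\la^s_n\oplus0^f)-\cgf_{n-1,1}(\la^s_n\oplus0^f)$ directly from the exponential-family density (essentially inlining the paper's Proposition on matched CGFs) rather than citing it, and you add a supplementary remark on the well-posedness of the multipliers, which the paper, like you, ultimately takes as granted.
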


\begin{proof}
According to Section 2, the law of $X_n$ is given by $\match(\mu^s_n,\mrm{Law}(X^{\de t}_{n-1,1}))$, where $\mu^s_n$ is extrapolated as in \eqref{eq:linearextrapolation} and $X^{\de t}_{n-1,1}$ is obtained from $X_{n-1}$ by one Euler-Maruyama step over $\de t$. Therefore, for the cumulant, Proposition~\ref{pro:cgf_prop_match} yields
\begin{equation}\label{eq:match_cum}
\cgf_{n}(\vth) = \cgf_{n-1,1}(\vth+\la^s_n\oplus0^f) - \cgf_{n-1,1}(\la^s_n\oplus0^f),
\end{equation}
where $\la^s_n$ is the Lagrange multiplier associated to the extrapolated marginal mean $\mu^s_n$.

In the current notation, the recursive formula~\eqref{eq:eulermaruyama} reads
\begin{equation*}
X^{\de t}_{n-1,1}=(\Id+\de tA)X_{n-1}+B\de W_{n,1}.
\end{equation*}
Because $X_{n-1}$ and $\de W_{n,1}$ are independent,  we obtain from Proposition~\ref{pro:cgf_prop}(\ref{pro:cgf_prop_indep}) that
\begin{equation*}
\cgf_{n-1,1} = \cgf_{(I+\de tA)X_{n-1}} + \cgf_{B\de W_{n,1}}.
\end{equation*}
The law of $B\de W_{n,1}$ is $\nd_{0,\de tB\!\tp{B}}$ thus, according to Example~\ref{ex:cgf_nd}, $\cgf_{B\de W_{n,1}}(\vth) = \de t/2\,\tp{\vth}\!B\tp{B}\vth$ and, using Proposition~\ref{pro:cgf_prop}(\ref{pro:cgf_prop_lintran}), we obtain
\begin{equation}\label{eq:euler_cum}
\cgf_{n-1,1}(\vth) = \cgf_{n-1}\big((\Id+\de t\tp{A})\vth\big) + \frac{\de t}{2}\tp{\vth}B\tp{B}\vth.
\end{equation}

Combining~\eqref{eq:match_cum} with~\eqref{eq:euler_cum} results in~\eqref{eq:mM_cum}. Since the effective domain of $X_{n-1}$ was equal to $\R^d$, the right-hand side of~\eqref{eq:mM_cum} is finite for all $\vth$. This implies $\Th_{n}=\R^d$.
\end{proof}

\subsection{Convergence to the equilibrium with stable mean extrapolation\label{sec:stab_result}}

The main result of this section depends on the following assumption on the initial random variable for the micro-macro acceleration method. Recall from Proposition~\ref{pro:cgf_prop}(\ref{pro:cgf_prop_tayl}) that the cumulant generating function is always analytic on the interior of its effective domain. Here and in what follows, for any constant $C\geq0$ and functions $f,g\from\R_{+}\to\R$ with $g$ positive in a~neighbourhood of infinity, $f(r)\otof Cg(r)$ means $\lim_{r\to+\infty}f(r)/g(r)=C$.

\begin{assumption}\label{as:cgf_der_asym}
The CGF $\cgf_0$, of an initial random variable $X_0$, has full effective domain (i.e., $\Th_{0}=\R^d$) and for every $\vth\in\R^d$ it satisfies
\begin{equation}\label{eq:cgf_der_asym}
\cgf_0(r\vth)\otof w_0(\vth)\,r^{2},\quad
\frac{\der}{\der{r}}\cgf_0(r\vth)\otof 2\,w_0(\vth)\,r,
\end{equation}
where $w_0\from\R^d\to(0,+\infty)$ is continuous and homogeneous of order $2$.
\end{assumption}

Assumption~\ref{as:cgf_der_asym} derives from the theory of regular variation. Let us first discuss its connection with the tails of random variables.

\begin{remark}[On log-quadratic tails]
The asymptotic relation $\cgf_0(r\vth)\otof w_0(\vth)\,r^{2}$ and Proposition~\ref{pro:cgf_prop}(\ref{pro:cgf_prop_lintran}) leads to $\cgf_{\vth\sdot X_0}(r)\otof w_0(\vth)\,r^{2}$. By the Kasahara-Tauberian Theorem~\cite[Thm.~4.12.7]{bingham1989regular}, the last relation is equivalent to the property
\begin{equation*}
-\ln\mbb{P}(\vth\sdot X_0\geq r)\otof-\ln\mbb{P}(\vth\sdot X_0\leq-r)\otof w_0(\vth)^{-1}r^2,\qquad\text{as}\ r\goesto+\infty.
\end{equation*}
That is, for every $\vth\in\R^d$, the random variable $\vth\sdot X_0$ has \emph{regularly varying log-quadratic} tail decay, a feature shared by all Gaussian laws. Therefore, Assumption~\ref{as:cgf_der_asym} sharpens the superlinear behaviour of tail functions that resulted from assuming $\Th_0=\R^d$, see Remark~\ref{re:cgf_fulldom}.
The inclusion of derivatives in~\eqref{eq:cgf_der_asym} is related to the notion of higher-order regular variation(compare~\cite{granata2016theory} and~\cite[p.~44]{bingham1989regular}).

\end{remark}


In the proof of Theorem~\ref{th:mM_mean_stab} below, we employ the following technical result.

\begin{lemma}\label{lem:cgf_asym_shift}
Let $\cgf$ be continuously differentiable function on $\R^{d}$  that for every $\vth\in\R^{d}$ satisfies $\cgf(r\vth)\otof w(\vth)\,r^{2}$ and 
\begin{equation}\label{eq:grad_cgf_linbound}
    \sup_{\vth\neq0}\frac{\|\grad\cgf(\vth)\|}{\|\vth\|} < +\infty.
\end{equation}
Then, for every $\vth,\vth_0\in\R^{d}$, it holds $\cgf(r\vth+\vth_0)\otof 2\,w(\vth)r^2$.
\end{lemma}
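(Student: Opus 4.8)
The plan is to separate the pure dilation $r\vth$ from the fixed shift $\vth_0$, since the hypothesis already controls $\cgf(r\vth)$ and the linear gradient bound~\eqref{eq:grad_cgf_linbound} is precisely what is needed to show that the shift is negligible at order $r^{2}$. I would start from the splitting
\begin{equation*}
\cgf(r\vth+\vth_0) = \cgf(r\vth) + \big[\cgf(r\vth+\vth_0)-\cgf(r\vth)\big],
\end{equation*}
where the first term satisfies $\cgf(r\vth)\otof w(\vth)\,r^{2}$ by assumption, so the whole argument reduces to estimating the bracketed increment.

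First I would write the increment as a line integral of the gradient along $t\mapsto r\vth+t\vth_0$,
\begin{equation*}
\cgf(r\vth+\vth_0)-\cgf(r\vth) = \int_0^1 \grad\cgf(r\vth+t\vth_0)\sdot\vth_0\,\der{t}.
\end{equation*}
Setting $L=\sup_{\eta\neq0}\norm{\grad\cgf(\eta)}/\norm{\eta}<+\infty$, continuity of $\grad\cgf$ forces $\grad\cgf(0)=0$, so $\norm{\grad\cgf(\eta)}\le L\norm{\eta}$ for every $\eta\in\R^{d}$; hence for $t\in[0,1]$ the integrand is bounded by $L\big(r\norm{\vth}+\norm{\vth_0}\big)\norm{\vth_0}$. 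The increment is therefore $O(r)$, so dividing by $r^{2}$ and letting $r\to+\infty$ kills the correction and yields $\cgf(r\vth+\vth_0)/r^{2}\to w(\vth)$, i.e.\ $\cgf(r\vth+\vth_0)\otof w(\vth)\,r^{2}$.

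The step I expect to be the real obstacle is the coefficient itself, where the stated conclusion reads $2\,w(\vth)r^{2}$ rather than $w(\vth)r^{2}$. The computation above shows that, under the stated hypotheses, the shift is a genuine lower-order ($O(r)$) perturbation, so the leading coefficient of the \emph{value} $\cgf(r\vth+\vth_0)/r^{2}$ is forced to equal $w(\vth)$; the Gaussian CGF of Example~\ref{ex:cgf_nd}, with $w(\vth)=\tfrac12\tp{\vth}\Si\vth$ and $\cgf(r\vth+\vth_0)=\tfrac12 r^{2}\tp{\vth}\Si\vth+O(r)$, is an explicit witness that $2\,w(\vth)$ cannot be the limit. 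The factor $2$ is instead the coefficient carried by the \emph{first-order} behaviour, in line with the derivative relation $\tfrac{\der}{\der{r}}\cgf_0(r\vth)\otof 2\,w_0(\vth)\,r$ in Assumption~\ref{as:cgf_der_asym}: adjoining that derivative asymptotic to the hypotheses and writing $\tfrac{\der}{\der{r}}\cgf(r\vth+\vth_0)=\grad\cgf(r\vth+\vth_0)\sdot\vth$, an analogous line-integral estimate on the gradient again isolates a lower-order shift contribution and gives the slope asymptotic $\tfrac{\der}{\der{r}}\cgf(r\vth+\vth_0)\otof 2\,w(\vth)\,r$. I would therefore present the lemma as the value asymptotic with coefficient $w(\vth)$ — which is all that the stated hypotheses support and all that the recursion~\eqref{eq:mM_cum} needs — and record that the displayed ``$2\,w(\vth)r^{2}$'' should read either $w(\vth)r^{2}$ for the value or $2\,w(\vth)r$ for its derivative, the factor $2$ belonging to the latter.
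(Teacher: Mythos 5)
Your proof is correct and takes essentially the same route as the paper: the same splitting $\cgf(r\vth+\vth_0)=\cgf(r\vth)+\big[\cgf(r\vth+\vth_0)-\cgf(r\vth)\big]$, with the increment controlled through the gradient bound \eqref{eq:grad_cgf_linbound}. The paper invokes the mean value inequality and rewrites $\|\grad\cgf(r\vth+r'\vth_0)\|/r$ as $\big(\|\grad\cgf(\eta)\|/\|\eta\|\big)\,\|\vth+(r'/r)\vth_0\|$, whereas you integrate the gradient along the segment after upgrading \eqref{eq:grad_cgf_linbound} to the global estimate $\norm{\grad\cgf(\eta)}\leq L\norm{\eta}$ via $\grad\cgf(0)=0$ (a valid observation, and one that also handles the degenerate points where the paper's ratio is undefined); both arguments show the increment is $O(r)$, hence negligible at order $r^2$. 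You are also right about the factor of $2$: with the paper's definition of $\otof$, the hypothesis $\cgf(r\vth)\otof w(\vth)\,r^2$ forces the second summand to converge to $w(\vth)$, not $2w(\vth)$ --- the paper's own proof asserts ``the second summand converges to $2w(\vth)$,'' contradicting its stated hypothesis --- and your Gaussian witness from Example~\ref{ex:cgf_nd} settles that the displayed conclusion $2\,w(\vth)r^2$ is a typo for $w(\vth)r^2$, the factor $2$ belonging to the derivative asymptotic as in Assumption~\ref{as:cgf_der_asym}. As you note, the slip is harmless downstream: in the proof of Theorem~\ref{th:mM_mean_stab} the limit of the left-hand side of \eqref{eq:cgf_rec} is simultaneously labelled $2w_n(\vth)$, so the spurious twos cancel and the recursion $w_n(\vth)=w_{n-1}\big((I+\de t\tp{A})\vth\big)+(\de t/2)\,\tp{\vth}B\tp{B}\vth$, the limit \eqref{eq:cgf_slope_infty}, and the derivative relation \eqref{eq:grad_cgf_asym} all stand exactly as written.
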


\begin{proof}
Express the ratio $\cgf(r\vth+\vth_0)/r^2$ as the following sum
\begin{equation*}
\frac{\cgf(r\vth+\vth_0)-\cgf(r\vth)}{r^2} + \frac{\cgf(r\vth)}{r^2}.
\end{equation*}
The second summand converges to $2w(\vth)$ as $r\goesto+\infty$ from the asymptotic property of $\cgf$. That the first fraction disappears can be seen by applying the mean value inequality
\begin{equation*}
    \frac{|\cgf(r\vth+\vth_0)-\cgf(r\vth)|}{r^2} \leq \frac{\|\grad\cgf(r\vth+r'\vth_0)\|\|\vth_0\|}{r^2},
\end{equation*}
where $r'\in[0,1]$. Since
\begin{equation*}
    \frac{\|\grad\cgf(r\vth+r'\vth_0)\|}{r} = \frac{\|\grad\cgf(r\vth+r'\vth_0)\|}{\|r\vth+r'\vth_0\|}\cdot \big\|\vth+\frac{r'}{r}\vth_0\big\|
\end{equation*}
stays bounded as $r\to+\infty$, due to~\eqref{eq:grad_cgf_linbound}, the right-hand side of the previous inequality converges to zero in this limit.
\end{proof}


\begin{theorem}\label{th:mM_mean_stab}
Suppose that the CGF $\cgf_0$ of an initial random variable $X_0$ satisfies Assumption~\ref{as:cgf_der_asym}, both $\cgf_0(\vth)$, and $\grad\cgf_0(\vth)\cdot\vth$ fulfill~\eqref{eq:grad_cgf_linbound}, and $\cgf_0$ has all derivatives of order $3$ or higher bounded. If $\spr{\Id+\de tA}<1$, and the mean $\mu_n$ of random variables $X_n$ obtained from the micro-macro acceleration method is stable, that is $\lim_{n\to+\infty}\mu_n=0$, the laws of $X_n$ converge weakly to $\nd_{0,V_\infty^{\de t}}$.
\end{theorem}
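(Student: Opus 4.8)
The plan is to obtain weak convergence through pointwise convergence of cumulant generating functions. Because the recursion of Section~\ref{sec:cdf} keeps the effective domain full, $\Th_n=\R^d$ for all $n$, it suffices to show that $\cgf_n(\vth)\to\tfrac12\tp{\vth}V_\infty^{\de t}\vth$ for every fixed $\vth$; convergence of the Laplace transforms $e^{\cgf_n}$ on a neighbourhood of the origin then yields weak convergence of the laws of $X_n$ to $\nd_{0,V_\infty^{\de t}}$ by the continuity theorem for moment generating functions (Curtiss/L\'evy). Writing $M:=\Id+\de tA$, so that $\spr{M}<1$, and $e_n:=\la^s_n\oplus0^f$, the recursion~\eqref{eq:mM_cum} reads $\cgf_n(\vth)=\cgf_{n-1}(\tp{M}(\vth+e_n))-\cgf_{n-1}(\tp{M}e_n)+Q_n(\vth)$ with $Q_n$ quadratic in $\vth$, and I would analyse it one cumulant order at a time.

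First the genuinely non-Gaussian part, i.e.\ the orders $\ge3$. Since $Q_n$ is quadratic, its derivatives of order $k\ge3$ vanish, so differentiating the recursion $k$ times gives $\grad^k\cgf_n(\vth)=M^{\otimes k}\,\grad^k\cgf_{n-1}(\tp{M}(\vth+e_n))$. Fixing an operator norm adapted to the spectrum with $\|M\|<1$ (available because $\spr{M}<1$) and using that $\tp{M}$ is invertible, the argument ranges over all of $\R^d$, so $\sup_\vth\|\grad^k\cgf_n\|\le\|M\|^{k}\sup_\vth\|\grad^k\cgf_{n-1}\|$ and hence $\sup_\vth\|\grad^k\cgf_n\|\le\|M\|^{kn}\sup_\vth\|\grad^k\cgf_0\|\to0$; the hypothesis that $\cgf_0$ has bounded derivatives of order $3$ and higher is exactly what makes the starting bound finite. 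A third-order Taylor remainder then gives, for each fixed $\vth$, $\bigl|\cgf_n(\vth)-\mu_n\sdot\vth-\tfrac12\tp{\vth}\Si_n\vth\bigr|\le\tfrac{\|\vth\|^3}{6}\,\|M\|^{3n}\sup_\vth\|\grad^3\cgf_0\|\to0$. This is the step that upgrades the purely Gaussian analysis of~\cite{debrabant2018study} to laws with Gaussian tails.

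It remains to identify the limits of the first two cumulants. The linear part of $\cgf_n$ is $\mu_n\sdot\vth$, which tends to $0$ by the standing mean-stability hypothesis $\mu_n\to0$; this is the only place that assumption enters directly. For the quadratic part, differentiating twice at the origin gives $\Si_n=M\,\grad^2\cgf_{n-1}(\tp{M}e_n)\,\tp{M}+\de tB\tp{B}$. The shift is harmless once the higher cumulants are controlled: $\|\grad^2\cgf_{n-1}(\tp{M}e_n)-\Si_{n-1}\|\le\|\tp{M}e_n\|\,\sup_\vth\|\grad^3\cgf_{n-1}\|$ and the last supremum already tends to $0$, so the recursion reduces to the perturbed discrete Lyapunov iteration $\Si_n=M\Si_{n-1}\tp{M}+\de tB\tp{B}+o(1)$, whose unique fixed point is $V_\infty^{\de t}=\de t\sum_{j\ge0}M^{j}B\tp{B}\tp{(M^{j})}$ from~\eqref{eq:invEM_var}; the contraction $\spr{M}<1$ forces $\Si_n\to V_\infty^{\de t}$. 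Combining the three orders yields $\cgf_n(\vth)\to\tfrac12\tp{\vth}V_\infty^{\de t}\vth$ pointwise, and weak convergence follows.

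The main obstacle is the control of the Lagrange-multiplier shifts $e_n$, which couple all the cumulant recursions since the derivatives of $\cgf_{n-1}$ enter evaluated at $\tp{M}e_n$ rather than at the origin. I would first argue that matching stays well posed along the entire trajectory: Lemma~\ref{lem:cgf_asym_shift}, together with the linear gradient bound~\eqref{eq:grad_cgf_linbound} on $\cgf_0$ and on $\grad\cgf_0(\vth)\sdot\vth$, propagates the regularly varying log-quadratic asymptotics of Assumption~\ref{as:cgf_der_asym} from $\cgf_0$ to every $\cgf_n$, so the slow-marginal CGFs remain strictly convex with slow Hessian block bounded below and the multiplier $e_n$ solving $\Pi^s\grad\cgf_{n-1,1}(e_n)=\mu^s_n$ (see~\eqref{eq:match_cum}) is uniquely defined. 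Since $\Pi^s\grad\cgf_{n-1,1}(0)=\Pi^sM\mu_{n-1}$, monotonicity of the slow gradient map bounds $\|e_n\|$ by the inverse slow-Hessian times $\|\mu^s_n-\Pi^sM\mu_{n-1}\|$; mean stability sends the second factor to $0$, while the uniform lower bound on the slow covariance keeps the first factor bounded, so the shifts stay bounded and in fact tend to $0$. Establishing that uniform positivity of the slow covariance block along the iteration, without circularity against the covariance limit one is trying to prove, is the delicate point, and it is precisely where the persistence of the log-quadratic tails granted by Lemma~\ref{lem:cgf_asym_shift} does the essential work.
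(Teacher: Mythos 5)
Your treatment of the cumulants of order three and higher is sound and essentially identical to the paper's (differentiating \eqref{eq:mM_cum} at least three times kills the quadratic term, back-substitution plus $\spr{\Id+\de tA}<1$ gives uniform geometric decay), and replacing the Fr\'echet--Shohat step by the continuity theorem for moment generating functions is legitimate once pointwise convergence of $\cgf_n$ is in hand. The genuine gap is in your second-cumulant step. Differentiating \eqref{eq:mM_cum} twice at the origin gives
\begin{equation*}
\hess\cgf_n(0) \;=\; (\Id+\de tA)\,\hess\cgf_{n-1}\big((\Id+\de t\tp{A})(\la^s_n\oplus0^f)\big)\,(\Id+\de t\tp{A}) \;+\; \de t\,B\tp{B},
\end{equation*}
where the Hessian of $\cgf_{n-1}$ is evaluated at the \emph{shifted} point $(\Id+\de t\tp{A})(\la^s_n\oplus0^f)$. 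Your reduction to a perturbed Lyapunov iteration therefore needs $\|\la^s_n\|\cdot\sup_{\vth}\|\sder^3\cgf_{n-1}(\vth)\|\to0$, i.e.\ an a priori estimate preventing $\|\la^s_n\|$ from growing at a geometric rate comparable to $\|\Id+\de t\tp{A}\|^{-3n}$. No such estimate is available: the multipliers exist for every $n$ (a consequence of $\Th_n=\R^d$), but nothing in the hypotheses controls their size, and this is exactly the obstruction the paper's proof is built to circumvent. The paper works instead with the tail slopes $w_n(\vth)=\lim_{r\to+\infty}\cgf_n(r\vth)/r^2$, which by Lemma~\ref{lem:cgf_asym_shift} are \emph{invariant under the multiplier shifts}; this yields the clean recursion $w_n(\vth)=w_{n-1}\big((\Id+\de t\tp{A})\vth\big)+(\de t/2)\tp{\vth}B\tp{B}\vth$ free of $\la^s_n$, and the second cumulant at the origin is then recovered from $w_n$ through the averaging identity $\lim_{r\to+\infty}\frac{1}{r}\int_0^r\tp{\vth}\hess\cgf_n(t\vth)\vth\,\der{t}=2w_n(\vth)$ combined with the uniform vanishing of the third derivatives.

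Your proposed repair---bounding $\|\la^s_n\|$ by monotonicity of the slow gradient map, with a uniform lower bound on the slow Hessian block claimed to come from Lemma~\ref{lem:cgf_asym_shift}---does not close this hole. The lemma and Assumption~\ref{as:cgf_der_asym} give, for each \emph{fixed} direction $\vth$, an asymptotic statement about $\cgf_n(r\vth)$ as $r\to+\infty$, with no uniformity in $\vth$ or in $n$; such directional tail asymptotics say nothing quantitative about $\hess\cgf_{n-1,1}$ at the finite points of the segment joining $0$ to $(\Id+\de t\tp{A})(\la^s_n\oplus0^f)$, which is what the inverse-function bound on $\|\la^s_n\|$ requires. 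Strict convexity of CGFs is qualitative and equally insufficient. Worse, the uniform positivity of the slow covariance block along the iteration that you invoke is essentially the covariance control you are in the middle of proving---the circularity you yourself flag is real, not merely delicate. As written, the claim $\Si_n\to V_\infty^{\de t}$ is therefore unproven; to complete the argument you must either supply a genuinely new coercivity estimate for the tilted slow gradient maps, uniform in $n$, or adopt the paper's strategy of passing to the shift-invariant tail recursion, which removes the Lagrange multipliers from the analysis altogether.
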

The stability of the mean $\mu_n$ is closely related to the stability bounds on $\De t$ that guarantee that the slow marginal mean $\mu_n^s$ satisfies $\lim_{n\to\infty}\mu_n^s=0$. The behaviour of fast marginal $\mu_n^f$ is influenced at each step by both extrapolation over $\De t$ and the matching. In the Gaussian setting, Lemma~\ref{lem:linearmatchingmean} provides the exact formula for $\mu_n^f$ in terms of $\mu_n^s$ and the first two moments of distributions produced by the Euler-Maryuama step. Using this formula, we can obtain explicit stability bounds on $\De t$ and $\de t$, exemplified in~\eqref{eq:sprbound}. In the non-Gaussian setting, we can only say that the influence of matching on $\mu_n^f$ is encoded in the nonlinear procedure to obtain Lagrange multipliers $\la_n$. Having no explicit formulas for the multipliers we work under the assumption that the full mean is stable when using the micro-macro acceleration method. 

\begin{proof}
To establish stability, we employ the recursive relation~\eqref{eq:mM_cum}. However, due to the presence of the Lagrange multipliers $\la^s_n$ in the argument of $\cgf_n$ and in the additional last term of~\eqref{eq:mM_cum}, we cannot immediately pass to the limit as $n$ goes to $+\infty$. These multipliers exist for all $n$, a~consequence of $\Th_n=\R^d$, but we do not have any \emph{a~priori} estimates that would allow to control $\la^s_n$ as $n$ increases. Therefore, our strategy in proving the convergence is to look at the recurrences for the tails of $\cgf_n$, which do not contain $\la^s_n$ any more. Before we look at $\cgf_n$ itself, let us use the boundedness of its higher order derivatives to show the convergence of higher cumulants. 

First note that the last term in formula~\eqref{eq:mM_cum} is of second order in $\vth$, so it disappears after differentiating this identity three times. More precisely,  we have, for any $j\geq 3$,
\begin{equation*}
\sder^j\cgf_n(\vth)[\vth'] = \sder^j\cgf_{n-1}\big((I+\de t \tp{A})(\vth+\la^s_n\oplus0^f)\big)\big[(I+\de t\tp{A})\vth'\big],
\end{equation*}
where we treat the $j$th order derivative $\sder^j\cgf_n(\vth)$ at $\vth\in\R^{d}$ as a symmetric multilinear mapping on $(\R^d)^j$ and denote $\sder^j\cgf_n(\vth)[\vth']\doteq\sder^j\cgf_n(\vth)[\vth',\dotsc,\vth']$. By back-substituting we obtain
\begin{equation*}
\sder^j\cgf_n(\vth)[\vth'] = \sder^j\cgf_{0}\Big((I+\de t \tp{A})^n\vth+\sum_{k=1}^{n}(I+\de t \tp{A})^k(\la^s_k\oplus0^f)\Big)\big[(I+\de t\tp{A})^n\vth'\big].
\end{equation*}
Since $\sder^j\cgf_0$ is bounded on $\R^d$ for all $j\geq3$, we can estimate from the above relation that
\begin{equation*}
\|\sder^j\cgf_n(\vth)\|_{\mrm{mult}}\leq \max_{\vth'\in\R^d}\|\sder^j\cgf_0(\vth')\|_{\mrm{mult}}\|I+\de t\tp{A}\|^{jn},
\end{equation*}
where $\|\sder^j\cgf_n(\vth)\|_{\mrm{mult}} \doteq\sup|\sder^j\cgf_n(\vth)(\vth^1,\dotsc,\vth^j)|/\|\vth^1\|\dotsm\|\vth^j\|$ and the supremum is taken over all $\vth^i\neq0$. Since $\spr{\Id+\de tA}<1$, this demonstrates that $\sder^j\cgf_n(\vth)$ converges to zero as $n\goesto+\infty$, uniformly in $\vth$. In particular, $\lim_{n\to+\infty}\sder^j\cgf_n(0)=0$ for all $j\geq3$.

Let us now return to~\eqref{eq:mM_cum}. We show by induction that for all $\vth\in\R^d$ the function $r\mapsto\cgf_{n}(r\vth)$ is asymptotically quadratic in $r$ and find the recursion for the corresponding constants $w_n(\vth)$. Suppose that $\cgf_{n-1}(r\vth)\otof w_{n-1}(\vth)r^2$ with some function $w_{n-1}$.
Note that Assumption~\ref{as:cgf_der_asym} guarantees that this holds for $\cgf_0$. The recursive relation~\eqref{eq:mM_cum} gives
\begin{gather}\label{eq:cgf_rec}
\begin{aligned}
\frac{\cgf_{n}(r\vth)}{r^2}
&= \frac{\cgf_{n-1}\big(r(I+\de t\tp{A})\vth + (I+\de t\tp{A})(\la^s_n\oplus0^f)\big)}{r^2} - \frac{\cgf_{n-1}\big((I+\de t\tp{A})(\la^n_n\oplus0^f)}{r^2}\\[0.2em]
&\hphantom{=\ }+ \frac{\de t}{2}\frac{r^2\tp{\vth}B\tp{B}\vth + r\tp{(\la^s_n\oplus0^f)}B\tp{B}\vth + r\tp{\vth}B\tp{B}(\la^s_n\oplus0)}{r^2}.
\end{aligned}
\end{gather}
Note that simultaneously, by differentiating~\eqref{eq:mM_cum}, the gradient of $\cgf_n$ satisfies
\begin{equation}\label{eq:cgf_grad_rec}
\grad\cgf_n(\vth) = (I+\de tA)\grad{\cgf_{n-1}}\big((I+\de t\tp{A})(\vth+\la^s_n\oplus0^f)\big) + \de t\tp{(\vth+\la^s_n\oplus0^f)}\!B\tp{B},
\end{equation}
thus property~\eqref{eq:grad_cgf_linbound} propagates from $\cgf_0$ throughout all $\cgf_{n}$. Therefore, we can use Lemma~\ref{lem:cgf_asym_shift}, together with the inductive assumption, to conclude that the limit as $r$ goes to $+\infty$ on the right hand side exists and is equal to $2w_{n-1}\big((I+\de t\tp{A})\vth\big)+ \de t\tp{\vth}B\tp{B}\vth$. Denoting by $2w_n(\vth)$ the limit of the left hand side, we have the  recursion $w_n(\vth) = w_{n-1}\big((I+\de t\tp{A})\vth\big)+ (\de t/2)\tp{\vth}\!B\tp{B}\!\vth$. By back-substituting, we obtain
\begin{equation}\label{eq:cgf_slope_n}
w_n(\vth) = w\big((I+\de t\tp{A})^n\vth\big) + \frac{\de t}{2}\tp{\vth}\Big[\sum_{j=0}^{n-1}(I+\de t A)^jB\tp{B}(I+\de t\tp{A})^j\Big]\vth.
\end{equation}

Note also that, by a similar reasoning as for $\cgf_n$, but using~\eqref{eq:cgf_grad_rec} this time, we can inductively establish the relation $\der/\!\der{r}\cgf_n(r\vth)\otof 2w_n(\vth)r$. In particular, after differentiating, this relation yields
\begin{equation}\label{eq:grad_cgf_asym}
    \lim_{r\to+\infty}\frac{\grad\cgf_n(r\vth)\cdot\vth}{r} = 2w_n(\vth),
\end{equation}
with $w_n$ satisfying~\eqref{eq:cgf_slope_n}.

Since the micro time step $\de t$ is stable, we can now take the limit as $n\goesto\infty$ on the right-hand side of~\eqref{eq:cgf_slope_n}. In consequence, we know that the point-wise limit $\om_\infty\doteq\lim_{n\to\infty}\om_n$ exists and, by \eqref{eq:invEM_var},  results in the following expression for the limiting function
\begin{equation}\label{eq:cgf_slope_infty}
w_{\infty}(\vth)=\frac{1}{2}\tp{\vth}\Big[\de t\sum_{j=0}^{\infty}(I+\de t A)^j B\tp{B}(I+\de t\tp{A})^j\Big]\vth = \frac{1}{2}\tp{\vth}V_\infty^{\de t}\vth.
\end{equation}

Having established the limiting behaviour of higher cumulants $\sder^j\cgf_n(0)$, for $j\geq3$, we will now use identity~\eqref{eq:cgf_slope_infty} to demonstrate that the second cumulants $\hess\cgf_n(0)$ converge to $V_\infty^{\de t}$. Via the fundamental theorem of calculus we have
\begin{equation*}
    \int_0^1r\tp{\vth}\hess\cgf_n(tr\vth)\vth\,\der{t} = \grad\cgf_n(r\vth)\cdot\vth - \grad\cgf_n(0)\cdot\vth.
\end{equation*}
Dividing by $r$ and changing variable of integration $t\mapsto t/r$ gives
\begin{equation*}
    \lim_{r\to+\infty}\frac{1}{r}\int_0^r\tp{\vth}\hess\cgf_n(t\vth)\vth\,\der{t} = \lim_{r\to+\infty}\Big(\frac{\grad\cgf_n(r\vth)\cdot\vth}{r} - \frac{\grad\cgf_n(0)\cdot\vth}{r}\Big) = 2w_n(\vth),
\end{equation*}
by the asymptotic property~\eqref{eq:grad_cgf_asym} of $r\mapsto\grad\cgf_n(r\vth)\cdot\vth$. Denoting $\mint{-}_0^{\infty}\doteq \lim_{r\to+\infty}\frac{1}{r}\int_0^r$, we have from~\eqref{eq:cgf_slope_infty}
\begin{equation*}
    \mint{-}_0^{\infty}\tp{\vth}\hess\cgf_n(t\vth)\vth\,\der{t} = 2w_{\infty}(\vth) = \tp{\vth}V^{\de t}_\infty\vth.
\end{equation*}
Because the Taylor expansion of $t\mapsto\cgf_n(t\vth)$ around $t=0$ gives $\hess\cgf_n(t\vth) = \hess\cgf_n(0) + \sder^3\cgf_n(\vth'_t)$, with some $\vth'_t$, and $\sder^3\cgf_n(\vth'_t)$ converges to zero as $n$ goes to $+\infty$, uniformly in $\vth'_t$, we get
\begin{equation*}
    \lim_{n\to+\infty}\tp{\vth}\hess\cgf_n(0)\vth = \lim_{n\to+\infty}\mint{-}_0^{\infty}\tp{\vth}\hess\cgf_n(t\vth)\vth\,\der{t} = \tp{\vth}V^{\de t}_\infty\vth,
\end{equation*}
for any $\vth\in\R^d$. This proves the convergence of the second cumulants.


In conclusion, since $\grad\cgf_n(0)=\mu_n$ converges to $0$, by assumption, $\hess\cgf_n(0)$ converges to $V_\infty^{\de t}$, and all higher order derivatives to $0$, we obtain the following limiting sequence of cumulants as $n$ goes to $+\infty$:
\begin{equation}
(0, V_\infty^{\de t}, 0,\dotsc).
\end{equation}
This sequence uniquely determines the normal distribution with mean zero and covariance matrix $V_\infty^{\de t}$. Thus, from the Frech\'{e}t-Shohat theorem~\cite[p.~307]{athreya2006measure}, the laws of $X_n$ converge weakly to $\nd_{0,V_\infty^{\de t}}$.
\end{proof}

\section{Numerical illustration: a periodically driven linear slow-fast system\label{sec:numerics}}

In this section, we numerically illustrate the convergence and stability results of the previous sections on an academic example. As in \cite{li2008effectiveness,vandecasteele2018efficiency},
 we define a linear slow-fast SDE with additive noise, in which we add a periodic forcing to the slow component 
\begin{equation} \label{eq:lineardrivensde}
\begin{cases}
dX = -2(X+Y)dt + \sin(2\pi t)dt + dW_x \\
dY = \frac{1}{\varepsilon}(X-Y)dt + \frac{1}{\sqrt{\varepsilon}}dW_y.
\end{cases}
\end{equation}
The parameter $\varepsilon$ is the time-scale separation between $X$ and $Y$ and controls the stiffness in the system. Introducing a periodic forcing allows to easily measure errors between the exact and numerical solution by computing the $L^2$ difference between both curves over one period. At the same time, the driving force does not alter stability and convergence properties of micro-macro acceleration. 

We first look at some numerical convergence tests (Section~\ref{sec:conv_num}), before moving on to computational experiments on stability (Section~\ref{sec:stab_num}).

\subsection{Convergence to the microscopic time integrator\label{sec:conv_num}}

By taking expectations of \eqref{eq:lineardrivensde}, one can show that the means $\mu_X$ (resp.~$\mu_Y$) of the slow (resp.~fast) component of the exact solution are given by
\begin{equation} \label{eq:lineardrivensolution}
\begin{pmatrix} \mu_X(t) \\ \mu_Y(t) \end{pmatrix} = e^{t M} \begin{pmatrix} \mu_{X_0}-A \\ \mu_{Y_0}-C \end{pmatrix} + \begin{pmatrix}A \\ C \end{pmatrix} \cos(2 \pi t) + \begin{pmatrix} B \\ D \end{pmatrix} \sin( 2\pi t),
\end{equation}
where $M = \begin{pmatrix} -2 & -2 \\ \ 10 & -10 \end{pmatrix}$ and $\begin{pmatrix}\mu_{X_0} & \mu_{Y_0} \end{pmatrix}^T$ is the mean of the initial condition of~\eqref{eq:lineardrivensde}. The constants $A,B,C$ and $D$ are the solution of the linear system 
\begin{equation*}
\begin{pmatrix} -2 \pi & 2 & 0 & 2 \\ 2 & 2\pi & 2 & 0 \\ 0 & -10 & -2 \pi & 10 \\ -10 & 0 & 10 & 2\pi \end{pmatrix} \begin{pmatrix} A \\ B \\ C \\ D \end{pmatrix} = \begin{pmatrix} 1 \\ 0 \\0 \\0 \end{pmatrix}.
\end{equation*}
To illustrate convergence (Theorem \ref{thm:linearslowconvergence}), we compute the error in the slow means obtained micro-macro acceleration against both the exact solution \eqref{eq:lineardrivensolution} and the numerical result obtained by the Euler-Maruyama integrator. We are mostly interested in the error between micro-macro acceleration and the Euler-Maruyama method because we want to understand the effect of extrapolation. As parameters we take $\varepsilon = 0.5$ and $\varepsilon = 0.05$ for the time-scale separation and we perform computations to an end time $T = 6$. We choose a small time step $\delta t = \varepsilon / 20$ for the microscopic time integrator and many values for the extrapolation time step $\Delta t$. The error is computed as the $L_2$ norm of the difference between two curves and averaged over 10 independent runs. The convergence results are depicted on Figure \ref{fig:convergence}.


\begin{figure}
    \centering
    \begin{subfigure}[b]{0.5\textwidth}
    \centering
    \includegraphics[width=\textwidth]{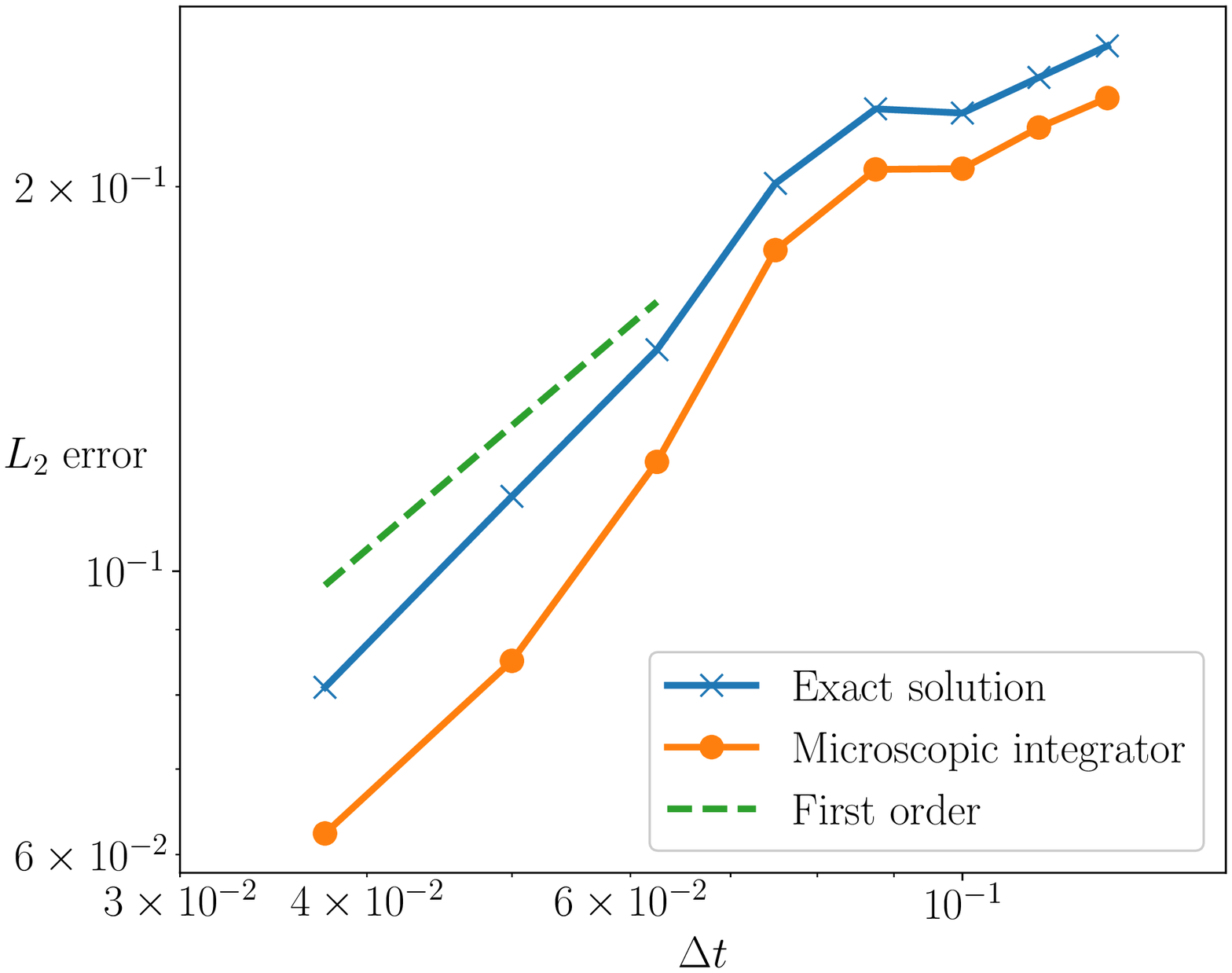}
    \end{subfigure}%
    \begin{subfigure}[b]{0.5\textwidth}
    \centering
    \includegraphics[width=\textwidth]{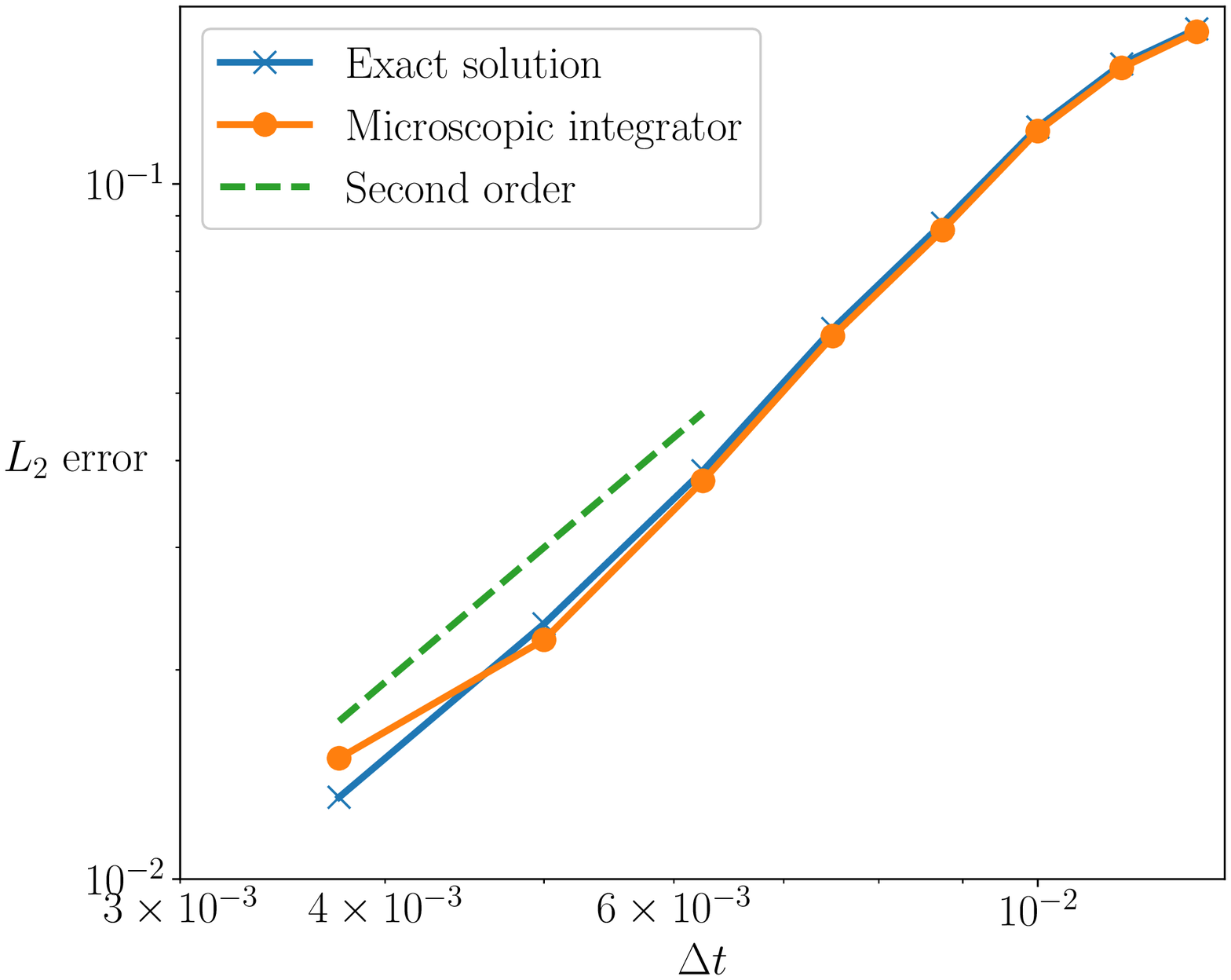}
    \end{subfigure}
    \caption{The error in the slow mean of micro-macro acceleration as a function of the extrapolation step size $\Delta t$, computed against the analytic solution \eqref{eq:lineardrivensolution} (blue) and against the numerical result obtained by the Euler-Maruyama method with time step $\delta t$ (orange) for $\varepsilon = 0.5$ (left) and $\varepsilon=0.05$ (right). We clearly see that the micro-macro acceleration error decreases when $\Delta t$ decreases, as given by Theorem \ref{thm:linearslowconvergence}. Moreover, for small $\varepsilon$ there is almost no difference between the error computed against the analytic solution and the microscopic time integrator as the latter is very accurate. This difference is higher for larger $\varepsilon$.}
    \label{fig:convergence}
\end{figure}
Figure \ref{fig:convergence} shows that the micro-macro acceleration error lowers as $\Delta t$ decreases to $\delta t$, as proven in Theorem \ref{thm:linearslowconvergence}. For a large $\varepsilon=0.5$ the error decreases linearly, while for a small $\varepsilon=0.05$ the decrease is quadratically. The order of convergence of micro-macro acceleration is not well-understood yet. In Figure \ref{fig:convergence} we also see that the error computed against the analytic solution and the Euler-Maruyama method is almost the same for small $\varepsilon=0.05$ since the microscopic integrator is very accurate. For larger $\varepsilon$, the error between micro-macro acceleration and the Euler-Maruyama method is smaller than with the exact solution because the Euler-Maruyama method makes a non-negligible error. 

\subsection{Stability with initial conditions having Gaussian tails\label{sec:stab_num}}

In the next experiment, we also consider the periodically driven linear system, but look at large extrapolation steps to study stability properties. The derivation in Section 4 does not give a threshold on the extrapolation step above which micro-macro acceleration becomes unstable and below which the algorithm is stable. To determine that a simulation was unstable, we will therefore rely on an alternative strategy, also proposed in \cite{debrabant2018study} for Gaussian initial conditions. In \cite{debrabant2018study}, it was shown that instability of the micro-macro acceleration technique unavoidably leads to so-called \textit{matching failures}, even before the solution blows up to infinity. A matching failure occurs when there exists no probability distribution that is consistent with the given macroscopic state variables $m$. In other words, the pair $(m, \mu)$ does not lie in the domain of the matching operator $\mathcal{M}$ for any prior distribution $\mu$, unless $\mu$ is consistent with $m$. 

In practice, we employ a Newton-Raphson method to compute the Lagrange multipliers in \eqref{eq:lagrangemultipliersmean} and detect a matching failure when the iterative method does not converge. Specifically, we must solve
\[
\int_G x \exp \left( \bar{\la}^s \cdot x - A(\bar{\la}^s, \mu) \right) d P^s(x) = \bar{\mu}^s,
\]
with a Newton-Raphson procedure, where we compute the integral using a Monte-Carlo representation of the prior distribution $P$. When the Newton-Raphson solver fails to reach the extrapolated slow mean $\bar{\mu}^s$ within an absolute tolerance of $10^{-11}$ in 50 steps, we mark a matching failure.

For the numerical experiment, we simulate the periodically driven linear system \eqref{eq:lineardrivensde} with $\epsilon=1$ for different pairs of step sizes $(\delta t, \Delta t)$ up to the end time $T = 100$. When a matching failure occurs, we mark the pair of parameters as unstable, and stable otherwise. The initial condition is Gaussian with mean zero and unit variance, which fits in the framework of Section 4. The number of Monte Carlo replicas is $N=10^5$ and we use $K=1$ step in the microscopic time integrator. The numerical results are summarized in Figure \ref{fig:stability}.

\begin{figure}
    \centering
    \includegraphics[width=0.4\textwidth]{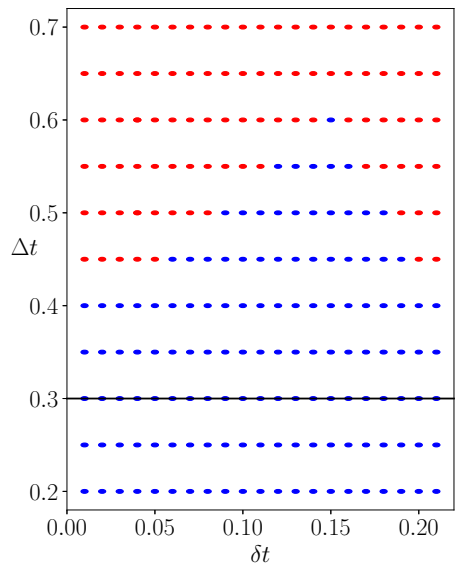}
    \caption{The $(\delta t, \Delta t)$ stability plane of micro-macro acceleration on the linear driven system \eqref{eq:lineardrivensde}. A blue dot indicates stability and a red dot instability. When at least one matching failure occurs, we mark an instability at the corresponding time step sizes. The stability domain has a V-shaped domain, and for every microscopic time step, the maximal extrapolation time step is larger than the deterministic stability bound of 0.3.}
    \label{fig:stability}
\end{figure}

Since $M$ is neither diagonal nor lower-triangular, there exist no stability bounds on the extrapolation step $\Delta t$ yet \cite{debrabant2018study}. As a proxy, we first compute the deterministic stability bound, as if there were no Brownian motion in \eqref{eq:lineardrivensde}. The eigenvalues of $M$ are $-6 \pm 2i$, implying the maximal deterministic time step is $\Delta t_{\text{max}} = 0.3$. The stability domain of micro-macro acceleration is V-shaped and for every $\delta t$, the maximal extrapolation step before instability is always greater than $0.3$. Micro-macro acceleration thus has good stability properties.

\section{Conclusion\label{sec:conclusions}}
We presented a micro-macro acceleration scheme, based on a combination of microscopic simulation and extrapolation of some macroscopic quantities of interest. We demonstrated that using only the slow mean during extrapolation results in a convergent and stable algorithm. The proofs hold for linear stochastic differential equations with additive noise. 
We complemented the analysis with numerical results  that indicate that the error of micro-macro acceleration decreases to zero when both the extrapolation and microscopic step size decrease to zero. For stability, we investigated for which pairs microscopic and extrapolation time steps micro-macro acceleration is stable, and compared the numerical results to the deterministic stability bounds in the case without Brownian motion. Empirically, the stability domain is V-shaped, and for every value of the microscopic time step, the maximal extrapolation step is above the deterministic stability bound.

\section*{Acknowledgements}
The authors thank Kristian Debrabant for proofreading the material in Section~\ref{sec:stability}. The authors acknowledge the support of the Research Council of the University of Leuven through grant ‘PDEOPT’ and of the Research Foundation – Flanders (FWO – Vlaanderen) under grant G.A003.13.

\bibliographystyle{plain}
\bibliography{ref}

\appendix

\section{Properties of cumulant generating functions}\label{se:cgf}
First, we mention the convexity of $\cgf_{P}$.
\begin{proposition}[{\cite[Thm.~A.4]{jorgensen1995exponential}}]\label{pro:cgf_conv}
Let $P\in\prob^d$. Then
\begin{enumerate}[{\normalfont (i)}]
\item\label{pro:cgf_conv_dom}
The set $\Th_P$ is convex.
\item\label{pro:cgf_conv_cgf}
$\cgf_P$ is a~convex function on $\Th_P$, and strictly convex if and only if $P$ is not concentrated in a~single point.
\end{enumerate}
\end{proposition}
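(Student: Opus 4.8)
The plan is to extract both parts of the proposition from a single application of H\"older's inequality to the integral that defines $\cgf_P$, since the convexity of the effective domain and the convexity of the function itself fall out of the same estimate. First I would fix $\vth_0,\vth_1\in\Th_P$ and $t\in(0,1)$, write $\vth_t=(1-t)\vth_0+t\vth_1$, and split the integrand as $e^{\vth_t\sdot\Pi}=\bigl(e^{\vth_0\sdot\Pi}\bigr)^{1-t}\bigl(e^{\vth_1\sdot\Pi}\bigr)^{t}$. Applying H\"older under $\Exp[P]$ with conjugate exponents $p=1/(1-t)$ and $q=1/t$ gives
\begin{equation*}
\Exp[P]\bigl[e^{\vth_t\sdot\Pi}\bigr]\leq\bigl(\Exp[P][e^{\vth_0\sdot\Pi}]\bigr)^{1-t}\bigl(\Exp[P][e^{\vth_1\sdot\Pi}]\bigr)^{t}.
\end{equation*}

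Next I would read off both statements from this one inequality. Because $\vth_0,\vth_1\in\Th_P$, the two factors on the right are finite, hence so is the left-hand side, so $\vth_t\in\Th_P$; this is the convexity of $\Th_P$ claimed in (i). Taking logarithms of the same inequality yields $\cgf_P(\vth_t)\leq(1-t)\cgf_P(\vth_0)+t\cgf_P(\vth_1)$, which is exactly the convexity of $\cgf_P$ asserted in (ii). Both claims thus come at no extra cost from the H\"older estimate.

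For the strictness equivalence I would turn to the equality case of H\"older: equality holds iff $e^{\vth_0\sdot\Pi}$ and $e^{\vth_1\sdot\Pi}$ are $P$-almost surely proportional, i.e.\ iff the linear functional $(\vth_1-\vth_0)\sdot\Pi$ is $P$-a.s.\ constant. If $P=\delta_b$ then $\cgf_P(\vth)=b\sdot\vth$ is affine and convexity is never strict; conversely, if $P$ is non-degenerate then $(\vth_1-\vth_0)\sdot\Pi$ cannot be a.s.\ constant for $\vth_0\neq\vth_1$ and the inequality is strict. The delicate point, which I expect to be the main obstacle, is pinning down exactly this notion of degeneracy: the H\"older equality case forces $P$ to live on an affine hyperplane rather than merely a point, so to land on the clean single-point dichotomy I would instead argue on $\mathrm{int}\,\Th_P$, where $\cgf_P$ is smooth (by the analyticity of CGFs on the interior of the effective domain recorded in Proposition~\ref{pro:cgf_prop}) and a direct computation identifies $\hess\cgf_P(\vth)$ with the covariance of $\Pi$ under the tilted law $\rnder{P_\vth}{P}=e^{\vth\sdot\Pi-\cgf_P(\vth)}$. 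That covariance is positive definite precisely when the tilted law, equivalently $P$, is not supported on a lower-dimensional set, and reconciling this Hessian criterion with the stated point-mass condition is the step I would handle most carefully.
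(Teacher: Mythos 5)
The paper offers no proof of this proposition at all: it is quoted, with attribution, from J{\o}rgensen's book (Thm.~A.4 there), so there is no in-paper argument to compare yours against. On its own merits, your H\"older computation is correct and complete for part (i) and for the convexity half of part (ii): finiteness of the right-hand side gives $\vth_t\in\Th_P$, and taking logarithms gives convexity of $\cgf_P$ on all of $\Th_P$, boundary points included. This is the standard proof and it is the right one, precisely because it does not require $\inter\Th_P\neq\emptyset$.

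The strictness claim is where you rightly hesitate, and the point you flag as ``delicate'' is not a technicality to be handled carefully --- it is unfixable, because the stated equivalence is false for $d\geq2$. Both criteria you derive characterize failure of strict convexity by ``$P$ is concentrated on some affine hyperplane'': H\"older equality for a pair $\vth_0\neq\vth_1$ holds iff $(\vth_1-\vth_0)\sdot\Pi$ is $P$-a.s.\ constant, and $\hess\cgf_P(\vth)$, being the covariance of $\Pi$ under the tilted law $\rnder{P_\vth}{P}=e^{\vth\sdot\Pi-\cgf_P(\vth)}$, is singular in direction $u$ iff $u\sdot\Pi$ is $P$-a.s.\ constant. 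These cannot be reconciled with the single-point condition: for $P=\delta_0\otimes\nd_{0,1}$ on $\R^2$ one has $\cgf_P((s,u))=u^2/2$, which is convex but affine (constant) along the first coordinate, hence not strictly convex, although $P$ is not a point mass. So only the implication ``strictly convex $\Rightarrow$ not a point mass'' holds in general; the full equivalence is correct exactly when $d=1$, where affine hyperplanes \emph{are} points. The clean resolution is to prove the corrected statement --- $\cgf_P$ is strictly convex on $\Th_P$ if and only if $P$ is not concentrated on any affine hyperplane of $\R^d$ --- and your H\"older equality case already does this on all of $\Th_P$ (note that if $P$ lives on $\{u\sdot x=c\}$ then the whole line $\R u$ lies in $\Th_P$ and $\cgf_P(su)=sc$ is affine there, so the failure is always visible). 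Your fallback Hessian route should be dropped rather than refined: it yields the same hyperplane criterion, but only on $\inter\Th_P$ and only when that interior is nonempty, so it is strictly weaker than what you already have.
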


When $X\sim P$ we also write $\cgf_X$ instead of $\cgf_P$, and $\Th_X$ instead of $\Th_P$.
\begin{proposition}[{\cite[Thm.~A.1 \&~A.7]{jorgensen1995exponential}, \cite[31]{keener2011theoretical}}]\label{pro:cgf_prop}
Assume that $X\sim P$, where $P\in\prob^d$. Then
\begin{enumerate}[{\normalfont (i)}]
\item\label{pro:cgf_prop_bound}
$-\infty<\cgf_P(\vth)\leq+\infty$ for $\vth\in\R^d$.
\item\label{pro:cgf_prop_zero}
$\cgf_P(0)=0$.
\item\label{pro:cgf_prop_lintran}
If $M\in\R^{l\times d}$ and $c\in\R^l$
\begin{gather*}
\begin{aligned}
\Th_{MX+c} &= \{s\in\R^l: \tp{M}s\in\Th_X\}\\[.2em]
\cgf_{MX+c}(s) &= \cgf_X(\tp{M}s) + s\sdot c,\quad s\in\R^l.
\end{aligned}
\end{gather*}
\item\label{pro:cgf_prop_tayl}
If $0\in\inter\Th_P$, $\cgf_P$ is analytic on $\inter\Th_P$ with Taylor expansion around $0$
\begin{equation*}
\cgf_P(\vth) = \sum_{j=1}^{+\infty}\frac{1}{j!}\sder^j\cgf_P(0)[\vth],
\end{equation*}
where $\sder^j\cgf_P(0)$ is the $j$th cumulant of $P$, considered as the $j$-linear mapping on $\R^d$, and $\sder^j\cgf_P(0)[\vth]\doteq\sder^j\cgf_P(0)[\vth,\dotsc,\vth]$. In particular, $X$ has vector mean and covariance matrix
\begin{equation*}
\Exp[][X]=\grad\cgf_{P}(0),\quad \Var(X)=\hess\cgf_{P}(0).
\end{equation*}
\item\label{pro:cgf_prop_indep}
If $X,Y$ are independent, $\cgf_{X+Y}=\cgf_X+\cgf_Y$ and $\Th_{X+Y}=\Th_X\cap\Th_Y$.
\end{enumerate}
\end{proposition}

\begin{proposition}\label{pro:cgf_prop_match}
If $Q=\match(\oline{\mu}^s,P)$ is the solution to \eqref{eq:slowmeanmatching} with a~(slow) marginal mean $\oline{\mu}^s\in\R^{d_s}$ and a~prior $P\in\prob^d$, then
\begin{equation*}
\cgf_Q(\vth)=\cgf_P(\vth+\oline{\la}^s\!\oplus0^f) - \cgf_P(\oline{\la}^s\!\oplus0^f),
\end{equation*}
where $\oline{\la}^s\in\R^{d_s}$ is a~vector of Lagrange multipliers corresponding to $\oline{\mu}^s$ and $0^f=(0,\dotsc,0)\in\R^{d_f}$.
\end{proposition}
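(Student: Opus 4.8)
The plan is to compute $\cgf_Q$ directly from Definition~\ref{dfn:cgf} by a change of the measure of integration from $Q$ to the prior $P$, using the explicit exponential Radon--Nikodym derivative of the matched distribution recalled in Section~\ref{sec:mM_match}. Since $Q=\match(\oline{\mu}^s,P)$, that derivative reads $\rnder{Q}{P}(x)=\exp(\oline{\la}^s\sdot x - A(\oline{\la}^s,P))$, where the log-partition function is $A(\oline{\la}^s,P)=\ln\Exp[P][\exp(\oline{\la}^s\sdot\Pi^s)]$. Once the embedding of $\oline{\la}^s$ into $\R^d$ is made explicit, the whole argument is essentially one line of bookkeeping, so I do not expect a genuine obstacle here.

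First I would write $\cgf_Q(\vth)=\ln\Exp[Q][e^{\vth\sdot\Pi}]$ and express the expectation as $\int_{\R^d}e^{\vth\sdot x}\,\rnder{Q}{P}(x)\,P(\der{x})$. Substituting the exponential density and pulling the constant $e^{-A(\oline{\la}^s,P)}$ out of the integral, the two exponentials combine as $e^{\vth\sdot x}e^{\oline{\la}^s\sdot x}=e^{(\vth+\oline{\la}^s\oplus0^f)\sdot x}$. The key identification here is that $\oline{\la}^s\sdot x$ stands for $\oline{\la}^s\sdot\Pi^s x=(\oline{\la}^s\oplus0^f)\sdot x$, so that the exponential tilt by the slow multiplier becomes \emph{exactly} a translation of the argument of $\cgf_P$ by the embedded vector $\oline{\la}^s\oplus0^f$.

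Next I would recognise the remaining integral as $\exp(\cgf_P(\vth+\oline{\la}^s\oplus0^f))$ and, by the same embedding, identify the log-partition constant with a value of the CGF of the prior: since $\oline{\la}^s\sdot\Pi^s=(\oline{\la}^s\oplus0^f)\sdot\Pi$, we have $A(\oline{\la}^s,P)=\cgf_P(\oline{\la}^s\oplus0^f)$. Taking logarithms and collecting the two terms then yields $\cgf_Q(\vth)=\cgf_P(\vth+\oline{\la}^s\oplus0^f)-\cgf_P(\oline{\la}^s\oplus0^f)$, which is precisely the claim.

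There is no deep difficulty: the computation is a direct change of measure combined with the exponential shape of the matching density. The two points that require care are the bookkeeping of the embedding $\oline{\la}^s\mapsto\oline{\la}^s\oplus0^f$, which is what converts the exponential tilt into a clean shift of $\cgf_P$, and the finiteness of the quantities manipulated. On the latter, I would note that the identity is valid on the effective domains, with $\vth\in\Th_Q$ corresponding to $\vth+\oline{\la}^s\oplus0^f\in\Th_P$; in the intended application the prior has full effective domain $\Th_P=\R^d$, so the shifted argument is always admissible and the formula holds for every $\vth\in\R^d$.
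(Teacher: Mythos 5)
Your proposal is correct and is essentially the paper's own proof: both compute $\cgf_Q$ by a change of measure from $Q$ to the prior $P$, insert the exponential density $\rnder{Q}{P}$, merge the exponents via the embedding $\oline{\la}^s\mapsto\oline{\la}^s\oplus0^f$, and identify the log-partition constant with $\cgf_P(\oline{\la}^s\oplus0^f)$. The only cosmetic difference is that the paper writes the normalization as $A(\oline{\la}^s,P^s)$ with the slow marginal as prior, which coincides with your $A(\oline{\la}^s,P)$ by the equality of log-partition functions established in Proposition~\ref{pro:faststates}.
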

\begin{proof}
Using formula~\eqref{eq:slowmeanmatching} we compute
\begin{align*}
\cgf_{Q}(\vth)
&= \ln\Exp[P]\Big[e^{\vth\sdot\proj}\,\rnder{Q}{P}\Big]\\
&= \ln\Exp[P]\Big[e^{\vth\sdot\proj+\oline{\la}^s\!\sdot\proj^s}\, e^{- A(\oline{\la}^s,P^s)}\Big]\\
&= \ln\Exp[P]\Big[e^{(\vth+\oline{\la}^s\!\oplus0^f)\sdot\proj}\Big] - A(\oline{\la}^s,P^s).
\end{align*}
It remains to note that by the definitions of log-partition function and the marginal distribution
\begin{equation*}
A(\oline{\la}^s,P^s) = \ln\Exp[P^s]\Big[e^{\oline{\la}^s\sdot\proj^s}\Big] = \ln\Exp[P]\Big[e^{(\oline{\la}^s\!\oplus0^f)\sdot\proj}\Big].\qedhere
\end{equation*}
\end{proof}

\begin{corollary}\label{co:match_mean_ndprior}
For any vectors $\oline{\mu},\mu\in\R^d$ and a symmetric, non-negative definite matrix $\Si\in\mat{d}{d}$, $\match(\oline{\mu},\nd_{\mu,\Si})=\nd_{\oline{\mu},\Si}$.
\end{corollary}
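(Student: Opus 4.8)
The plan is to recognize that matching the \emph{full} mean $\oline{\mu}\in\R^d$ is exactly the degenerate case of the slow-mean matching in which the slow space is the whole space $\R^d$, so that $\proj=\Id$, $d_s=d$ and $d_f=0$. Nothing in the derivation of Proposition~\ref{pro:cgf_prop_match} uses a nontrivial fast component, so I may apply it with $0^f$ absent and $\oline{\la}\in\R^d$. This reduces the statement to a one-line computation with Gaussian cumulant generating functions, which is presumably why the corollary sits directly after that proposition.

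First I would write the CGF of $Q=\match(\oline{\mu},\nd_{\mu,\Si})$ from Proposition~\ref{pro:cgf_prop_match}, namely $\cgf_Q(\vth)=\cgf_P(\vth+\oline{\la})-\cgf_P(\oline{\la})$ with $P=\nd_{\mu,\Si}$. Substituting the Gaussian CGF from Example~\ref{ex:cgf_nd}, $\cgf_P(\vth)=\mu\sdot\vth+\tfrac12\tp{\vth}\Si\vth$, and expanding, the pure quadratic term in $\oline{\la}$ cancels against $-\cgf_P(\oline{\la})$ while the cross term $\tp{\oline{\la}}\Si\vth$ survives as a linear shift, leaving
\begin{equation*}
\cgf_Q(\vth)=(\mu+\Si\oline{\la})\sdot\vth+\tfrac12\tp{\vth}\Si\vth.
\end{equation*}
Reading Example~\ref{ex:cgf_nd} in reverse, this is precisely the CGF of a Gaussian law with covariance $\Si$ and mean $\mu+\Si\oline{\la}$, so $Q$ is Gaussian with variance $\Si$.

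Finally I would invoke the matching constraint $\Exp[Q][\proj]=\oline{\mu}$ built into~\eqref{eq:slowmeanmatching}: since $\grad\cgf_Q(0)=\mu+\Si\oline{\la}$ is the mean of $Q$ by Proposition~\ref{pro:cgf_prop}(\ref{pro:cgf_prop_tayl}), the constraint forces $\mu+\Si\oline{\la}=\oline{\mu}$, and together with the identified covariance $\Si$ this gives $Q=\nd_{\oline{\mu},\Si}$. An equivalent sanity check I would record is the direct density tilt: the matched density $\exp(\oline{\la}\sdot x-A(\oline{\la},P))\,\pi_P(x)$ is the exponential of a quadratic in $x$ whose second-order part is unchanged, so only the mean moves and $\Si$ is preserved. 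The sole point needing care is the existence of $\oline{\la}$, i.e.\ solvability of $\Si\oline{\la}=\oline{\mu}-\mu$ when $\Si$ is only non-negative definite and possibly singular; this is exactly the feasibility of the matching (equivalently $\oline{\mu}-\mu\in\mathrm{range}(\Si)$), and on the domain where $\match$ is defined the identification above goes through verbatim. This feasibility bookkeeping is the only mild obstacle — the computation itself is routine.
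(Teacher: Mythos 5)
Your proposal is correct and follows essentially the same route as the paper: both apply Proposition~\ref{pro:cgf_prop_match} with the Gaussian CGF of Example~\ref{ex:cgf_nd} and close the argument with the multiplier relation $\Si\oline{\la}=\oline{\mu}-\mu$, the paper deriving it first from~\eqref{eq:lagrangemultipliersmean} and substituting, while you expand first and then impose the constraint $\Exp[Q][\proj]=\oline{\mu}$ --- two phrasings of the same first-order condition. Your remark on solvability of $\Si\oline{\la}=\oline{\mu}-\mu$ for singular $\Si$ is a point the paper's proof silently passes over, and handling it via feasibility of the matching is the right resolution.
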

\begin{proof}
According to \eqref{eq:lagrangemultipliersmean}, the vector of Lagrange multipliers $\oline{\la}$ corresponding to matching with $\oline{\mu}$ satisfies $\grad[\la]A(\oline{\la},\nd_{\mu,\Si})=\oline{\mu}$. When matching with mean only, the log-partition function coincides with the cumulant generating function, that is $A(\la,\nd_{\mu,\Si})=\cgf_{\nd_{\mu,\Si}}(\la)$, and using formula from Example~\ref{ex:cgf_nd} we obtain
\begin{equation*}
\Si\oline{\la}=\oline{\mu}-\mu.
\end{equation*}
Employing Proposition~\ref{pro:cgf_prop_match} with $Q=\match(\oline{\mu},\nd_{\mu,\Si})$ we compute
\begin{align*}
\cgf_{Q}(\vth)
&= \mu\sdot\vth + \frac{1}{2}\big(\tp{\vth}\Si\oline{\la} + \tp{\oline{\la}}\Si\vth\big) + \frac{1}{2}\tp{\vth}\Si\vth\\
&= \mu\sdot\vth + (\oline{\mu}-\mu)\sdot\vth + \frac{1}{2}\tp{\vth}\Si\vth\\
&= \oline{\mu}\sdot\vth + \frac{1}{2}\tp{\vth}\Si\vth.\qedhere
\end{align*}
\end{proof}

\end{document}